\begin{document}

\journalname{Journal of Theoretical Probability}
\author{Vladimir Dobric\inst{1} \and Lisa Marano\inst{2}}
\institute{Lehigh University 
\email{vd00@lehigh.edu}
\and West Chester University of Pennsylvania 
\email{lmarano@wcupa.edu}%
}
\title{{Lower Bounds for the Distribution of Suprema of Brownian Increments
and Brownian Motion Normalized by the Corresponding Modulus Functions}}
\maketitle

\begin{abstract}
The L\'{e}vy-Ciesielski Construction of Brownian motion is used to determine
non-asymptotic estimates for the maximal deviation of increments of a
Brownian motion process $(W_{t})_{t\in \left[ 0,T\right] }$ normalized by
the global modulus function, for all positive $\varepsilon $ and $\delta $.
Additionally, uniform results over $\delta $ are obtained. Using the same
method, non-asymptotic estimates for the distribution function for the
standard Brownian motion normalized by its local modulus of continuity are
obtained. Similar results for the truncated Brownian motion are provided and
play a crucial role in establishing the results for the standard Brownian
motion case.
\end{abstract}

\keywords{Brownian motion; global and local moduli of continuity of Brownian
motion; L\'{e}vy-Ciesielski construction of Brownian motion; law of the
iterated logarithm.}

\section{Introduction}

We present a unified method for establishing both local and global moduli of
continuity for a Brownian motion process, $(W_{t})_{t\geq 0}$.  A most
useful conequence of our process allows for explicit estimates which will be
explained in more detail below. We briefly recall some basic properties of
the L\'{e}vy-Ciesielski construction in section two. This construction
compared to others is used most often to establish the continuity of sample
paths; however, not until recently has it been exploited to show other
properties. For example, J.P. Kahane in 1985 applied the orthonormal
expansion to study slow and fast points of the Brownian motion process \cite%
{Kahane}. M. Pinsky offers a simple proof of the existence of the modulus of
continuity based on the L\'{e}vy-Ciesielski construction \cite{pinsky}. 

In section three, we exploit the L\'{e}vy-Ciesielski construction and the
piecewise-linear truncated process $\left( W_{t}^{n}\right) _{t\in \lbrack
0,1]}$ (the following section contains an explicit description of this
process) over dyadic intervals to establish several results regarding the
global modulus of continuity for Brownian motion.

Specifically for every $\varepsilon >0$\ and for every $\delta >0$, we
determine an estimate for the maximal deviation of increments of a Brownian
motion process, $W_{t}$, normalized by the global modulus function. More
explicitly, we have determined functions $k$ and $p_{1}$ so that for every $%
\varepsilon >0$ and every $\delta >0$ 
\begin{equation*}
\mathbb{P}\left( \underset{\left\vert t-s\right\vert <\delta }{\sup_{0\leq
t<s\leq 1}}\frac{\left\vert W_{t}-W_{s}\right\vert }{g(\delta )}\leq
1+k(\varepsilon ,\delta )\right) \geq 1-p_{1}(\varepsilon ,\delta ),
\end{equation*}%
where $g(x)$ is the global modulus of continuity for Brownian motion, $\sqrt{%
2xL\frac{1}{x}}$. More significantly, we also establish uniform results over 
$\delta $ for the global modulus of continuity. Specifically, we determine $%
p_{2},$ so that for all $\varepsilon >0$ and every $\delta _{o}>0,$%
\begin{equation*}
\mathbb{P}\left( \sup_{\delta \leq \delta _{o}}\underset{\left\vert
t-s\right\vert <\delta }{\sup_{0\leq t<s\leq 1}}\frac{\left\vert
W_{t}-W_{s}\right\vert }{g(\delta )}\leq 1+k(\varepsilon ,\delta
_{o})\right) \geq 1-p_{2}(\varepsilon ,\delta _{o}).
\end{equation*}%
Surprisingly, $p_{2}$ is similar to $p_{1}$ but with larger constants when $%
\varepsilon $ is bounded away from 0. It should not be a surprise that to
establish the uniform result was computationally challenging to say the
least.

Using a similar method, in section four, we establish the local modulus of
continuity for the standard Brownian motion, with the same type of gains.
That is, we construct functions $l$ and $q$ so that 
\begin{equation*}
\mathbb{P}\left( \sup_{t\leq \delta }\frac{W_{t}}{h\left( t\right) }\leq
1+l(\varepsilon ,\delta )\right) \geq 1-q(\varepsilon ,\delta )
\end{equation*}%
where $h(x)=\sqrt{2x\ln \ln \frac{1}{x}}$, the local modulus of continuity.
In all cases the L\'{e}vy-Ciesielski representation reveals how and why the
logarithmic terms appear in the corresponding $g$ and $h$ functions.

We stress the differences between our work and others. First, the results of
this paper are not asymptotic results, for they hold for every $\varepsilon
>0$\ AND for every $\delta >0$. Many papers have been written on this
subject; to the best of our knowledge, all of which are of an
asymptotic-type. See \cite{kolmogorov}, \cite{Erdos}, and \cite{levy}, and
more recently, \cite{Khosh}, \cite{Einmahel}, and \cite{pinsky}. Their
results take the form: for every $\varepsilon >0$, there is $\delta \left(
\varepsilon \right) >0 \dots$ Transitioning from these asymptotics to the
results presented here would require several steps of approximations, as
compared to our one.{\Large \ }First $\delta \left( \varepsilon \right) $
must be estimated. Then, after rescaling, a second level of approximations
would be required to express the LIL\ or modulus of continuity in terms of
an arbitrary $\delta .$ These multiple levels of estimation would certainly
affect the constants involved in the asymptotic results. Our results are
straightforward. We express Brownian motion by an appropriate infinite sum,
then determine 
\begin{equation*}
\underset{\left\vert t-s\right\vert <\delta }{\sup_{0\leq t<s\leq 1}}\frac{%
\left\vert W_{s}^{n}-W_{t}^{n}\right\vert }{g\left( s-t\right) }
\end{equation*}%
and 
\begin{equation*}
\sup_{t\leq \delta }\frac{W_{t}^{n}}{h\left( t\right) }
\end{equation*}%
exactly; our only estimate is of the tail. Second, our approach is unique in
that the method works for both the global and local modulus and allow us to
easily establish both L\'{e}vy's modulus of continuity and the law of
iterated logarithms. The only difference between the two proofs is where we
split the process into a piecewise-linear truncated process and an infinite
tail. Finally, in practice, the usefulness of our results is the ability to
choose $\delta $ a priori and independently of $\varepsilon $, and to select 
$\varepsilon $ afterwards corresponding to a desired confidence level.

\begin{remark}
Maple\texttrademark\ was used in many of our calculations. The values
obtained were rounded to at most three decimal places in a way which did not
compromise the direction of any inequalities and was coarser than the
precision level of the computer algebra system. Moreover, as we seek to
determine estimates for probabilities we at times produce long strings of
inequalities. Thus when we number an inequality we are referring to greatest
quantity in the string.
\end{remark}

\begin{remark}
Throughout the paper, we will use the convention: $L\left( x\right) :=\ln x$
and $L_{2}\left( x\right) =\ln \ln x;$ we also use the standard notation, $%
\left[ x\right] $ to denote the greatest integer less than or equal to $x.$
\end{remark}

\section{The L\'{e}vy-Ciesielski Construction}

Throughout this paper, we let $W=$ $(W_{t})_{0\leq t\leq 1}$, a Brownian
motion process over the unit interval and $t\rightarrow W_{t}$ be a
realization of the process over the unit interval. The L\'{e}vy-Ciesielski
construction of Brownian motion is based on the Haar expansion of the
covariance function of a Brownian motion process $W$ in the Cameron-Martin
space. Via an isomorphism, it leads to the following representation of the
Brownian motion process:

\begin{equation*}
W_{t}=tX_{o}+\sum_{j=0}^{\infty }2^{\frac{-j}{2}}\sum_{k=0}^{2^{j}-1}\Lambda
_{j,k}\left( t\right) X_{j,k}
\end{equation*}%
where $X_{o}$ and $X_{j,k}$, for all$\ j,k$, are independent, standard
normal random variables and 
\begin{equation*}
\Lambda _{j,k}(t)=\min \{2^{j}t-k,1-2^{j}t+k\}\mathbf{1}_{I_{j,k}}\left(
t\right)
\end{equation*}%
with $I_{j,k}=\left[ k2^{-j},\left( k+1\right) 2^{-j}\right) .$

Let $W_{t}^{n}$ be the $n^{th}$ partial sum of $W_{t},$ which includes $%
tX_{o}$. The process $W^{n}=(W_{t}^{n})$ possesses some interesting
properties. First, $W_{t}^{n}$ and $W_{t}$ agree at the dyadics at the $%
\left( n+1\right) ^{th}$ level; that is, $W_{k2^{-n-1}}^{n}=W_{k2^{-n-1}}$
for $k=0, \dots ,2^{-n-1}.$ Moreover, for $t\in I_{n+1,k},$ the process $%
W_{t}^{n}$ is linear in $t,$ i.e. $W_{t}^{n}=At+B$ where $A$ and $B$ are
normal random variables. Therefore, the process, $W^{n}$ is equivalent to
the piecewise-linear process, $\overline{W}^{n},$ created by connecting the
points $\left( k/2^{n+1},W_{k/2^{n+1}}\right) ,$ $k=0, \dots ,2^{n+1}$
linearly. As mentioned above this was noticed by P. L\'{e}vy and we shall
use this fact repeatedly throughout. A more thorough introduction to this
expansion can be found in \cite{Steele}.

\section{Global Maximal Deviations for Truncated Brownian Increments and
Brownian Increments}

In this section we develop several results regarding the global modulus of
continuity for the truncated Brownian motion process and the process itself.
First we obtain an estimate for the distribution function of the ratio
between the truncated Brownian increment and the global modulus of
continuity function $g\left( \delta \right) $. Using this result we
establish an estimate for the distribution function of the maximal deviation
for the ratio of the Brownian increment and $g\left( \delta \right) .$ More
specifically, for $\varepsilon ,\delta >0,$ we determine the probability of
the set 
\begin{equation*}
\left\{ \underset{\left\vert t-s\right\vert \leq \delta }{\sup_{0\leq
t<s\leq 1}}\frac{\left\vert W_{s}-W_{t}\right\vert }{g(\delta )r\left(
\delta \right) }\leq \sqrt{1+\varepsilon }\right\}
\end{equation*}%
where $r\left( x\right) =1+3.5\left( L\frac{1}{\left\vert x\right\vert }%
\right) ^{-\frac{1}{2}}.$ As $\delta \rightarrow 0,$ the function $\delta
\rightarrow \underset{\left\vert t-s\right\vert \leq \delta }{\sup }\frac{%
\left\vert W_{s}-W_{t}\right\vert }{g\left( \delta \right) r\left( \delta
\right) }$ is not necessarily monotonic. Therefore we establish an estimate
for the probability of the set 
\begin{equation*}
\left\{ \sup_{\delta \leq \delta _{o}}\underset{\left\vert t-s\right\vert
\leq \delta }{\sup_{0\leq t<s\leq 1}}\frac{\left\vert W_{s}-W_{t}\right\vert 
}{g(\delta )r\left( \delta \right) }\leq \sqrt{1+\varepsilon }\right\} ,
\end{equation*}%
which is monotonic in $\delta _{o}$.

\subsection{Preliminaries}

The first lemma is essential in estimating the probability of the set 
\begin{equation*}
\left\{ \underset{\left\vert t-s\right\vert \leq \delta }{\sup_{0\leq
t<s\leq 1}}\frac{\left\vert W_{s}^{n}-W_{t}^{n}\right\vert }{g\left(
s-t\right) }>\sqrt{1+\varepsilon }\right\} .
\end{equation*}%
The second is needed to uniformly estimate the tail of the truncated
increment.

Notice that $W_{s}^{n}$ and $W_{t}^{n}$ are piecewise linear in $s$ and $t$,
and therefore so is their difference; that is, $W_{s}^{n}-W_{t}^{n}=At+Bs+C$
for some random variables $A,B,$ and $C.$

\begin{lemma}
\label{mod calc lemma}Let $\delta _{0}>\delta \geq 0$ and $f:C\rightarrow R$
be defined by 
\begin{equation*}
f(t,s)=\frac{\left\vert at+bs+c\right\vert }{g\left( s-t\right) }
\end{equation*}%
where $a,b$ and $c$ are constants and $g$ is the global modulus of
continuity function of a Brownian motion process and $C$ is the convex set 
\begin{equation*}
\left\{ \lbrack t_{1},t_{2}]\times \left[ s_{1},s_{2}\right] \cap \left\{
\left( t,s\right) :\delta \leq \left\vert s-t\right\vert \leq \delta
_{0}\right\} \right\} 
\end{equation*}%
with $t_{1},s_{1}>0,t_{2}<s_{1},$and $s_{2}-t_{1}<1.$Then $f$ achieves its
maximum at one of the extreme points of the convex set $C.$

Moreover, the supremum of $f$ over all $s$ and $t$ such that $\left\vert
s-t\right\vert <\delta $ is achieved at a value for $s-t$ which is bounded
away from zero. That is, for $\delta >0,$%
\begin{equation*}
\underset{\left\vert s-t\right\vert \leq 2^{-n}\wedge \delta }{%
\sup_{(t,s)\in I_{n+1,k}\times I_{n+1,k+1}}}\frac{\left\vert
at+bs+c\right\vert }{g\left( s-t\right) }=\underset{2^{-n-1}\leq \left\vert
s-t\right\vert \leq 2^{-n}\wedge \delta }{\sup_{(t,s)\in I_{n+1,k}\times
I_{n+1,k+1}}}\frac{\left\vert at+bs+c\right\vert }{g\left( s-t\right) }.
\end{equation*}
\end{lemma}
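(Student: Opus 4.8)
The plan is to derive both assertions from a single one-variable fact: the function $u\mapsto (\alpha u+\beta)^{2}/g(u)^{2}$ is quasi-convex on $(0,1)$ for every choice of constants $\alpha,\beta$, because $g(u)^{2}=2uL(1/u)=-2u\ln u$ is \emph{concave} on $(0,1)$ (its second derivative is $-2/u<0$) while $(\alpha u+\beta)^{2}$ is convex. Indeed, for $c>0$ the set $\{u:(\alpha u+\beta)^{2}\le c\,g(u)^{2}\}$ is the sublevel set $\{(\alpha u+\beta)^{2}-c\,g(u)^{2}\le 0\}$ of a convex function, hence an interval, and the levels $c\le 0$ are trivial; so every sublevel set of $(\alpha u+\beta)^{2}/g(u)^{2}$ is an interval.

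For the first assertion, I restrict $f^{2}=(at+bs+c)^{2}/g(s-t)^{2}$ to an arbitrary line segment contained in $C$. If $s-t$ is constant along the segment, $f^{2}$ is a convex quadratic in the segment parameter, hence quasi-convex; otherwise $s-t$ is a nonconstant affine function of the parameter whose range lies in $(0,1)$ — on $C$ one has $s-t\ge s_{1}-t_{2}>0$ and $s-t\le s_{2}-t_{1}<1$ — so $f^{2}$ is, up to an affine change of variable, the quasi-convex function above. Thus $f$ is quasi-convex along every segment in $C$. Since $C$ is a compact convex polygon, every point $p\in C$ is a convex combination $p=\lambda v_{1}+(1-\lambda)p'$ of finitely many extreme points $v_{i}$; quasi-convexity along $[v_{1},p']$ gives $f(p)\le\max(f(v_{1}),f(p'))$, and iterating yields $f(p)\le\max_{i}f(v_{i})$. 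Hence $f$ attains its maximum over $C$ at an extreme point.

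For the ``moreover'' assertion I work inside $I_{n+1,k}\times I_{n+1,k+1}$ and substitute $\sigma=s-(k+1)2^{-n-1}\ge 0$, $\tau=(k+1)2^{-n-1}-t\ge 0$, so that $s-t=\sigma+\tau$; since the affine form $at+bs+c$ is $W_{s}^{n}-W_{t}^{n}$, it vanishes at the common node $t=s=(k+1)2^{-n-1}$, hence equals a homogeneous linear form $m_{2}\sigma+m_{1}\tau$ in the new variables. The dilation $(\sigma,\tau)\mapsto(\lambda\sigma,\lambda\tau)$ multiplies the numerator $|m_{2}\sigma+m_{1}\tau|$ by $\lambda$ and replaces the denominator $g(\sigma+\tau)$ by $g(\lambda(\sigma+\tau))$, and for $\lambda>1$ with $u,\lambda u\in(0,1)$ one computes $\lambda g(u)/g(\lambda u)=\sqrt{\lambda\,L(1/u)/L(1/(\lambda u))}>1$ (because $0<L(1/(\lambda u))=-\ln\lambda-\ln u<-\ln u=L(1/u)\le\lambda L(1/u)$), so the dilation strictly increases $f$. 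Given any admissible $(\sigma,\tau)$ with $\sigma+\tau<2^{-n-1}$, taking $\lambda=2^{-n-1}/(\sigma+\tau)$ produces a still-admissible point (it keeps $\lambda\sigma,\lambda\tau\le 2^{-n-1}$ and, in the range of $n$ relevant to the applications, $\lambda(\sigma+\tau)=2^{-n-1}\le 2^{-n}\wedge\delta$) lying in $\{|s-t|\ge 2^{-n-1}\}$ with strictly larger value of $f$; therefore the supremum over $\{|s-t|\le 2^{-n}\wedge\delta\}$ equals the supremum over $\{2^{-n-1}\le|s-t|\le 2^{-n}\wedge\delta\}$.

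The main obstacle is the convexity input for the first assertion: $f^{2}$ itself is not convex, so one must argue through quasi-convexity and the concavity of $g^{2}$, and then use that a function which is quasi-convex along every segment attains its maximum on a polytope at a vertex. The remaining ingredients — the concavity of $-2u\ln u$ on $(0,1)$, the containment $s-t\in(0,1)$ on $C$, the monotonicity estimate $\lambda g(u)/g(\lambda u)>1$, and the bookkeeping that the dilation stays inside the admissible region — are all routine.
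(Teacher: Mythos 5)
Your proof is correct, and since the paper's own proof of this lemma consists of the single word ``Calculus,'' your argument is a genuinely different, fully worked-out route rather than a reconstruction of the paper's. Where the authors evidently intend a direct critical-point/monotonicity analysis of $f$ on the edges and interior of $C$, you obtain the vertex-maximum claim structurally: $g(u)^{2}=-2u\ln u$ is concave on $(0,1)$, so $(\alpha u+\beta)^{2}-c\,g(u)^{2}$ is convex for $c>0$, the one-variable ratio is quasi-convex, hence $f$ is quasi-convex along every segment of $C$ (the case where $s-t$ is constant along the segment being a convex quadratic), and a continuous quasi-convex function on a compact convex polygon attains its maximum at an extreme point. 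For the ``moreover'' part you replace calculus by a dilation argument: in the variables $\sigma=s-(k+1)2^{-n-1}$, $\tau=(k+1)2^{-n-1}-t$ the increment is homogeneous, and $\lambda g(u)/g(\lambda u)=\sqrt{\lambda L(1/u)/L(1/(\lambda u))}>1$ for $\lambda>1$, so any admissible point with $\sigma+\tau<2^{-n-1}$ can be pushed out to $\sigma+\tau=2^{-n-1}$ without decreasing $f$; this is clean and correct. Two remarks, both to your credit: you noticed that the displayed equality is false for arbitrary constants (e.g.\ $a=b=0$, $c\neq 0$ makes the left side infinite as $s-t\to 0$) and that it also needs $\delta\geq 2^{-n-1}$, since otherwise the right-hand region is empty; you supply exactly the hypotheses under which the lemma is invoked in Theorem \ref{mod theorem nth partial sum}, namely that $at+bs+c=W_{s}^{n}-W_{t}^{n}$ vanishes at the common node $t=s=(k+1)2^{-n-1}$ and that $\delta\geq 2^{-n-1}$. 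The quasi-convexity and scaling route buys a derivative-free argument with no case analysis of vanishing partial derivatives, at the mild cost of invoking the standard fact about quasi-convex maxima on polytopes, which you also prove by the usual iteration over extreme points.
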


\begin{proof}
Calculus.
\end{proof}

The next lemma is a variation of a well-known fact from the theory of
Gaussian processes: the maximum of a finite collection of identically
distributed normal random variables essentially grows as the square root of
the natural log of the cardinality of the collection \cite{talagrand}. Many
variations of this lemma have been used in the past. For instance, in 1991,
a version similar to the one presented here was used by Meyer \cite{meyer}.

\begin{lemma}
\label{tail estimate}For $d>0$%
\begin{equation*}
\mathbb{P}\left( \underset{0\leq k\leq 2^{j}-1}{\max_{j\geq n}}\frac{%
\left\vert X_{j,k}\right\vert }{\sqrt{L2^{j}}}>\sqrt{2(d+1)}\right) \leq 
\frac{2^{-dn}}{\left( 1-2^{-d}\right) \sqrt{\pi L2^{n}}}.
\end{equation*}
\end{lemma}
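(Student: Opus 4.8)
The plan is to control the maximum by a union bound over the index $j \ge n$ and $0 \le k \le 2^j - 1$, using the standard Gaussian tail estimate $\mathbb{P}(|Z| > x) \le \frac{2}{x\sqrt{2\pi}} e^{-x^2/2}$ for a standard normal $Z$ and $x > 0$. First I would note that the event $\left\{ \max_{j \ge n} \max_{0 \le k \le 2^j-1} |X_{j,k}|/\sqrt{L2^j} > \sqrt{2(d+1)} \right\}$ is contained in $\bigcup_{j \ge n} \bigcup_{k=0}^{2^j-1} \left\{ |X_{j,k}| > \sqrt{2(d+1) L2^j} \right\}$, so the probability is at most $\sum_{j \ge n} 2^j \, \mathbb{P}\!\left( |Z| > \sqrt{2(d+1) L2^j} \right)$.

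Next I would plug in the Gaussian tail bound with $x = \sqrt{2(d+1) L2^j} = \sqrt{2(d+1) j L2}$. The exponential factor becomes $e^{-x^2/2} = e^{-(d+1) L2^j} = 2^{-(d+1)j}$, which combines with the leading $2^j$ to give $2^{-dj}$. The polynomial prefactor $\frac{2}{x\sqrt{2\pi}} = \frac{2}{\sqrt{2\pi} \sqrt{2(d+1) j L2}}$ is decreasing in $j$, so on the whole sum I can bound it by its value at $j = n$, pulling $\frac{1}{\sqrt{\pi L2^n}} \cdot \frac{1}{\sqrt{d+1}}$ (after simplifying $\frac{2}{\sqrt{2\pi}\sqrt{2 L 2^n}} = \frac{1}{\sqrt{\pi L 2^n}}$) out of the sum, and then $\frac{1}{\sqrt{d+1}} \le 1$ for $d > 0$. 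What remains is the geometric series $\sum_{j \ge n} 2^{-dj} = \frac{2^{-dn}}{1 - 2^{-d}}$, which assembles exactly into the claimed bound $\frac{2^{-dn}}{(1-2^{-d})\sqrt{\pi L 2^n}}$.

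The only genuinely delicate point is justifying that the polynomial prefactor $\frac{2}{x_j \sqrt{2\pi}}$ (with $x_j = \sqrt{2(d+1) j L2}$) can be replaced by its $j=n$ value uniformly across the sum without damaging the constant; this is immediate since $x_j$ is increasing in $j$, but one should also check the edge arithmetic so that the $\sqrt{\pi L 2^n}$ in the denominator — rather than, say, $\sqrt{2\pi L 2^n}$ — comes out correctly from $\frac{2}{\sqrt{2\pi} \cdot \sqrt{2 L 2^n}}$. Beyond that, everything is a routine union bound plus a geometric sum, so I do not anticipate any real obstacle; the lemma is, as the authors note, a standard maximal inequality for i.i.d.\ Gaussians with the constants tracked explicitly.
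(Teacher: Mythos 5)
Your proposal is correct and follows essentially the same route as the paper: a union bound over $j\geq n$ and $k$, the Gaussian tail estimate $\mathbb{P}(|Z|>x)\leq \frac{2}{x\sqrt{2\pi}}e^{-x^{2}/2}$ yielding the summand $\frac{2^{-dj}}{\sqrt{\pi(d+1)L2^{j}}}$, and then bounding the prefactor by its value at $j=n$ (dropping the $\sqrt{d+1}\geq 1$ factor) before summing the geometric series. No gaps.
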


\begin{proof}
\begin{align*}
\mathbb{P}\left( \underset{0\leq k\leq 2^{j}-1}{\max_{j\geq n}}\frac{%
\left\vert X_{j,k}\right\vert }{\sqrt{L2^{j}}}>\sqrt{2(d+1)}\right) & \leq
\sum_{j=n}^{\infty }2^{j}\mathbb{P}\left( \frac{\left\vert N\left(
0,1\right) \right\vert }{\sqrt{L2^{j}}}>\sqrt{2(d+1)}\right) \\
& \leq \sum_{j=n}^{\infty }\frac{2^{-dj}}{\sqrt{\pi (d+1)L2^{j}}} \\
& \leq \frac{2^{-dn}}{\left( 1-2^{-d}\right) \sqrt{\pi L2^{n}}}.
\end{align*}
\end{proof}

\subsection{Global maximal deviations}

In this subsection we establish three results. The first theorem estimates
the distribution function of the maximal deviation between the ratio of the
truncated increment and the global modulus function for a fixed $\delta >0$.
Based on that result, coupled together with the tail estimate, the second
theorem estimates the distribution function for the maximal deviation of the
ratio of the increment of the process $W$ and the global modulus of
continuity function for a fixed $\delta .$ Finally, we go beyond the results
of the standard modulus of continuity by establishing a result which holds
uniformly over $\delta $. This last result allows us to establish rates of
convergence; see \cite{DobricMarano}.

\begin{theorem}
\label{mod theorem nth partial sum}For $\varepsilon >0$ and $n\geq 4$, if $%
0<\delta <2^{-n-1}$ we have 
\begin{equation*}
\mathbb{P}\left( \underset{\left\vert s-t\right\vert \leq \delta }{%
\sup_{0\leq t<s\leq 1}}\frac{\left\vert W_{s}^{n}-W_{t}^{n}\right\vert }{%
g\left( s-t\right) }>\sqrt{1+\varepsilon }\right) \leq \frac{3\delta
^{\varepsilon }}{\sqrt{\pi L\frac{1}{\delta }}},
\end{equation*}%
and if $\delta \geq 2^{-n-1}$ then 
\begin{equation*}
\mathbb{P}\left( \underset{\left\vert s-t\right\vert \leq \delta }{%
\sup_{0\leq t<s\leq 1}}\frac{\left\vert W_{s}^{n}-W_{t}^{n}\right\vert }{%
g\left( s-t\right) }>\sqrt{1+\varepsilon }\right) \leq \frac{2^{-\varepsilon
\left( n+1\right) }K\left( \varepsilon ,\delta ,n\right) }{\sqrt{\pi L\frac{1%
}{\delta }}},
\end{equation*}%
where 
\begin{equation*}
K\left( \varepsilon ,\delta ,n\right) =1+9\left( 2\right) ^{\varepsilon
}+4\left( 2^{n+1}\delta \right) ^{1+\varepsilon }+2\left( 2^{n+1}\delta
\right) ^{2+\varepsilon }.
\end{equation*}
\end{theorem}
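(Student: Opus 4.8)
The plan is to reduce the supremum over the continuous set $\{|s-t|\le\delta,\ 0\le t<s\le 1\}$ to a finite maximum over dyadic data using the structure of the truncated process, and then to compute that finite maximum exactly before estimating its tail with a Gaussian union bound. First I would exploit the fact, recalled in Section~2, that $W^n$ is piecewise linear on the dyadic intervals $I_{n+1,k}$ with breakpoints at the dyadics $k2^{-n-1}$, so that $W^n$ agrees with $W$ there. Since $0<\delta$ is small relative to $2^{-n-1}$ in the first case, any pair $(t,s)$ with $|s-t|\le\delta$ lies in at most two adjacent intervals $I_{n+1,k}\times I_{n+1,k}$ or $I_{n+1,k}\times I_{n+1,k+1}$; on each such cell $W^n_s-W^n_t = at+bs+c$ is affine, so by Lemma~\ref{mod calc lemma} the ratio $|at+bs+c|/g(s-t)$ attains its maximum at an extreme point of the relevant convex set, and moreover the optimizing value of $s-t$ is bounded below by $2^{-n-1}$ (the second assertion of Lemma~\ref{mod calc lemma}, with the appropriate choice of interval). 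This collapses the problem to the finitely many increments $W_{k2^{-n-1}}-W_{j2^{-n-1}}$ with $|k-j|2^{-n-1}$ in the admissible range, each of which is a centered Gaussian with an explicit variance equal to its time increment.

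Next I would set up the union bound. Writing $\xi_{j,k}$ for the relevant normalized increments — Gaussians of variance $v = (k-j)2^{-n-1}$, evaluated against $g(v)=\sqrt{2v L(1/v)}$ — the event in question is contained in a union of events of the form $\{|N(0,v)| > \sqrt{1+\varepsilon}\,\sqrt{2vL(1/v)}\}$, i.e. $\{|N(0,1)| > \sqrt{2(1+\varepsilon)L(1/v)}\}$. Using the standard Gaussian tail bound $\mathbb P(|N(0,1)|>x)\le \tfrac{2}{x\sqrt{2\pi}}e^{-x^2/2}$ gives a per-term bound of order $v^{1+\varepsilon}/\sqrt{L(1/v)}$, and then I would sum over the $O(1/\delta)$ relevant pairs. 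In the regime $0<\delta<2^{-n-1}$ only increments of length exactly $2^{-n-1}$ survive (by the lower-bound half of Lemma~\ref{mod calc lemma}, since $\delta$ forces $|s-t|$ into $[2^{-n-1}\wedge\delta, 2^{-n-1}]$, but $\delta<2^{-n-1}$... here one must be careful), so the count is $2^{n+1}-1$ and each term carries a factor $\delta^{\varepsilon}$ after bounding $g$ monotonicity; combining the count, the exponential factor, and a numerical estimate of the remaining constants (valid for $n\ge 4$) yields the stated bound $3\delta^{\varepsilon}/\sqrt{\pi L(1/\delta)}$. For $\delta\ge 2^{-n-1}$ the admissible increment lengths range over $\{2^{-n-1}, 2\cdot2^{-n-1},\dots\}$ up to $\delta$, and grouping the sum according to increment length — each length $m2^{-n-1}$ contributing roughly $(2^{n+1}/m)$ endpoints times $(m2^{-n-1})^{1+\varepsilon}$ — produces the polynomial-in-$(2^{n+1}\delta)$ shape of $K(\varepsilon,\delta,n)$, with the four terms $1$, $9\cdot2^\varepsilon$, $4(2^{n+1}\delta)^{1+\varepsilon}$, $2(2^{n+1}\delta)^{2+\varepsilon}$ coming from the boundary increments, the smallest nontrivial increments, and the bulk/largest increments respectively.

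The main obstacle will be the bookkeeping in the second case: tracking exactly which dyadic pairs arise, how the extreme-point reduction of Lemma~\ref{mod calc lemma} interacts with the constraint $\delta\le|s-t|$ versus $|s-t|\le\delta$ (the lemma is stated with a general convex region $C$ cut out by $\delta\le|s-t|\le\delta_0$, and one must instantiate it correctly so that the surviving extreme points really are dyadic endpoints), and then summing the resulting geometric-type series with enough care that the constants $3$, $9$, $4$, $2$ genuinely dominate for all $n\ge 4$. The Gaussian-tail step and the basic union bound are routine (and in the spirit of Lemma~\ref{tail estimate}); the delicate part is the exact combinatorics of the finite maximum and the consolidation of many small error contributions into the clean closed form for $K$, which is presumably where Maple\texttrademark\ enters to certify the numerical inequalities.
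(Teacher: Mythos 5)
Your overall architecture is the paper's: split the region into level-$(n+1)$ dyadic cells $I_{n+1,k}\times I_{n+1,k+l}$, use Lemma \ref{mod calc lemma} to push the supremum to extreme points of each (constrained) cell, bound each extreme-point probability by a Gaussian tail, and sum. The genuine gap is in your reduction step: the claim that the problem ``collapses to the finitely many increments $W_{k2^{-n-1}}-W_{j2^{-n-1}}$'' is false. The cells that are cut by the constraint $\left\vert s-t\right\vert \leq \delta$ --- the pentagon at $l=\left[ 2^{n+1}\delta \right]$, the triangle at $l=\left[ 2^{n+1}\delta \right]+1$, and the $l=1$ cells when $\delta <2^{-n-1}$ --- have extreme points lying on the line $s-t=\delta$, where at least one coordinate is non-dyadic, so the optimized increment is \emph{not} an increment of $W$ between dyadic points. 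These points are not negligible: they are precisely the source of the $\delta ^{1+\varepsilon }$ contributions in the paper's bounds (and hence of part of $K$ and of the whole bound $3\delta ^{\varepsilon }/\sqrt{\pi L\frac{1}{\delta }}$ in the small-$\delta$ regime). To treat them you need the additional fact, absent from your sketch, that $\mathrm{Var}\left( W_{s}^{n}-W_{t}^{n}\right) \leq s-t$ for arbitrary $s,t$ (it holds because $W_{s}^{n}-W_{t}^{n}$ is a sum of independent, fractionally scaled dyadic increments of $W$), after which the same Gaussian tail estimate applies with $g(s-t)$ evaluated at the boundary value $s-t=\delta $.

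The case $0<\delta <2^{-n-1}$ compounds this: there are no nontrivial dyadic pairs at level $n+1$ with $\left\vert s-t\right\vert \leq \delta$ at all, and your statement that ``only increments of length exactly $2^{-n-1}$ survive'' contradicts the constraint $\left\vert s-t\right\vert \leq \delta <2^{-n-1}$; the second assertion of Lemma \ref{mod calc lemma} lower-bounds $\left\vert s-t\right\vert$ by $2^{-n-1}$ only when $\delta \geq 2^{-n-1}$. You flagged the issue (``here one must be careful'') but did not resolve it, and the subsequent counting is off: what actually happens (as in the paper) is that each of the $2^{n+1}$ cells contributes one term from $l=0$ (monotonicity of the ratio in $\left\vert s-t\right\vert$ lets one evaluate at $\left\vert s-t\right\vert =\delta$) and two extreme points from $l=1$ on the line $s-t=\delta$, each bounded by $\delta ^{1+\varepsilon }/\sqrt{\pi L\frac{1}{\delta }}$; the constant $3$ is these three terms, and the factor $2^{n+1}\delta \leq 1$ is what converts $\delta ^{1+\varepsilon }$ into $\delta ^{\varepsilon }$ after summing over $k$. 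Your version, with $2^{n+1}-1$ terms ``each carrying a factor $\delta ^{\varepsilon }$,'' does not produce the stated bound without exactly this $2^{n+1}\delta$ bookkeeping, so as written the first half of the theorem is not established. The large-$\delta$ half of your plan (grouping by increment length $l2^{-n-1}$, with the rectangle/pentagon/triangle cells supplying the terms of $K$) is the right shape and matches the paper once the boundary extreme points are handled as above.
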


\begin{remark}
\label{restriction on n}The restriction on $n$ is imposed to assure
monotonicity of the functions involved. Without this restriction the
constants involved would be greater.
\end{remark}

\begin{proof}
Let $\varepsilon ,\delta >0$ and $n\geq 4.$ Define $\delta _{n}=\min \left\{
\delta ,2^{-n-1}\right\} $. Let $I_{k}$ denote the $k^{th}$ dyadic interval
at the level $n+1;$ that is, $I_{k}=I_{n+1,k}.$ While $W_{s}^{n}-W_{t}^{n}$
is piecewise linear over the set $\left\{ \left( t,s\right) |0\leq t<s\leq
1,\left\vert s-t\right\vert \leq \delta \right\} $, the increment is linear
in $s$ and $t$ when $(t,s)\in I_{k}\times I_{k+l}$. Therefore, we increase
the size of the set 
\begin{equation*}
\left\{ \underset{\left\vert s-t\right\vert \leq \delta }{\sup_{0\leq
t<s\leq 1}}\frac{\left\vert W_{s}^{n}-W_{t}^{n}\right\vert }{g\left(
s-t\right) }>\sqrt{1+\varepsilon }\right\}
\end{equation*}%
to gain linearity and obtain 
\begin{align}
& \mathbb{P}\left( \underset{\left\vert s-t\right\vert \leq \delta }{%
\sup_{0\leq t<s\leq 1}}\frac{\left\vert W_{s}^{n}-W_{t}^{n}\right\vert }{%
g\left( s-t\right) }>\sqrt{1+\varepsilon }\right)  \notag \\
& \leq \sum_{k=0}^{2^{n+1}-1}\sum_{l=0}^{\left[ \delta 2^{n+1}\right] +1}%
\mathbb{P}\left( \underset{\left\vert s-t\right\vert \leq \delta }{%
\sup_{(t,s)\in I_{k}\times I_{k+l}}}\frac{\left\vert
W_{s}^{n}-W_{t}^{n}\right\vert }{g\left( s-t\right) }>\sqrt{1+\varepsilon }%
\right) .  \label{nth bound (modulus)}
\end{align}%
Fix $k$ and consider different $l$ for the set 
\begin{equation*}
\left\{ \underset{\left\vert s-t\right\vert \leq \delta }{\sup_{(t,s)\in
I_{k}\times I_{k+l}}}\frac{\left\vert W_{s}^{n}-W_{t}^{n}\right\vert }{%
g\left( s-t\right) }>\sqrt{1+\varepsilon }\right\} .
\end{equation*}%
Note, if $l=0$ or $l=1,$ $\left\vert s-t\right\vert $ is not necessarily
bounded away from zero and must be treated with care. If $l>1,$ we must be
mindful of the shape of the underlying set.

If $l=0,$ $s$ and $t$ lie in the same dyadic interval $I_{j},$ for $j\leq
n+1.$ Thus, for each $j\leq n+1,$ there exists an integer $k_{j}$ such that $%
s,t\in \lbrack k_{j}2^{-j},(k_{j}+1)2^{-j})$. Moreover, 
\begin{equation*}
W_{s}^{n}-W_{t}^{n}=\left( s-t\right) \left(
X_{o}+\sum_{j=0}^{n}2^{j/2}(-1)^{\varepsilon _{j}\left( t\right)
}X_{j,k_{j}}\right) ,\ 
\end{equation*}%
where $\varepsilon _{j}\left( t\right) $ is the $j^{th}$ term in the binary
expansion of $t$ (and $s$). Hence, 
\begin{align}
& \mathbb{P}\left( \underset{\left\vert s-t\right\vert \leq \delta }{%
\sup_{s,t\in I_{k}}}\frac{\left\vert W_{s}^{n}-W_{t}^{n}\right\vert }{%
g\left( s-t\right) }>\sqrt{1+\varepsilon }\right)  \notag \\
& =\mathbb{P}\left( \underset{\left\vert s-t\right\vert \leq \delta }{%
\sup_{s,t\in I_{k}}}\frac{\left\vert s-t\right\vert \left\vert
X_{o}+\sum\limits_{j=0}^{n}2^{j/2}(-1)^{\varepsilon _{j}\left( t\right)
}X_{j,k_{j}}\right\vert }{\sqrt{2\left\vert s-t\right\vert L\frac{1}{%
\left\vert s-t\right\vert }}}>\sqrt{1+\varepsilon }\right)  \notag \\
& \leq \mathbb{P}\left( \left\vert N\left( 0,1\right) \right\vert >\sqrt{%
2\left( 1+\varepsilon \right) L\frac{1}{\delta _{n}}}\right) \leq \frac{%
\left( \delta _{n}\right) ^{1+\varepsilon }}{\sqrt{\pi L\frac{1}{\delta _{n}}%
}}.  \label{l=0}
\end{align}

Next if $l=1,$ the difference $s-t$ is no more than $2^{-n}$ but is not
necessarily bounded away from zero, so we consider two cases, $\delta
<2^{-n-1}$ and $\delta \geq 2^{-n-1}$.

If $\delta <2^{-n-1},$ again by Lemma \ref{mod calc lemma}, maximum is
achieved at one of the two points: $\left( \left( k+1\right) /2^{n+1},\text{ 
}\left( k+1\right) /2^{n+1}-\delta \right) $ or $\left( \left( k+1\right)
/2^{n+1}\text{ }-\delta ,\text{ }\left( k+1\right) /2^{n+1}\right) $
yielding 
\begin{equation}
\mathbb{P}\left( \underset{\left\vert s-t\right\vert \leq \delta }{%
\sup_{\left( t,s\right) \in I_{k}\times I_{k+1}}}\frac{\left\vert
W_{s}^{n}-W_{t}^{n}\right\vert }{g\left( s-t\right) }>\sqrt{1+\varepsilon }%
\right) \leq 2\frac{\delta ^{1+\varepsilon }}{\sqrt{\pi L\frac{1}{\delta }}}.
\label{l=1, delta small}
\end{equation}

If $\delta \geq 2^{-n-1},$ by Lemma \ref{mod calc lemma}, 
\begin{equation*}
\underset{\left\vert s-t\right\vert <2^{-n}\wedge \delta }{\sup_{(t,s)\in
I_{k}\times I_{k+1}}}\frac{\left\vert W_{s}^{n}-W_{t}^{n}\right\vert }{%
g\left( s-t\right) }=\underset{2^{-n-1}\leq \left\vert s-t\right\vert
<2^{-n}\wedge \delta }{\sup_{(t,s)\in I_{k}\times I_{k+1}}}\frac{\left\vert
W_{s}^{n}-W_{t}^{n}\right\vert }{g\left( s-t\right) }.
\end{equation*}
and maximum is achieved at one of the four points: \hspace{0.1in}%
\begin{equation*}
\begin{array}{cc}
(k/2^{n+1}, (k+1)/2^{n+1}), & \left( k/2^{n+1}, k/2^{n+1}+ \left(
2^{-n}\wedge \delta \right) \right), \\ 
{} & {} \\ 
\left( \left( k+2\right) /2^{n+1}-\text{ }\left( 2^{-n}\wedge \delta
\right), \left( k+2\right) /2^{n+1}\right), \ \text{or} & \left(
(k+1)/2^{n+1},(k+2)/2^{n+1}\right). \\ 
& 
\end{array}%
\end{equation*}

Thus 
\begin{align}
& \mathbb{P}\left( \underset{\left\vert s-t\right\vert \leq \delta }{%
\sup_{\left( t,s\right) \in I_{k}\times I_{k+1}}}\frac{\left\vert
W_{s}^{n}-W_{t}^{n}\right\vert }{g\left( s-t\right) }>\sqrt{1+\varepsilon }%
\right)  \notag \\
& \leq 2\left[ \frac{2^{-\left( n+1\right) \left( 1+\varepsilon \right) }}{%
\sqrt{\pi L2^{n+1}}}+\frac{2^{-n\left( 1+\varepsilon \right) }\wedge \delta
^{1+\varepsilon }}{\sqrt{\pi L\left( 2^{n}\vee \frac{1}{\delta }\right) }}%
\right] \leq 4\frac{2^{-n\left( 1+\varepsilon \right) }}{\sqrt{\pi L2^{n}}}.
\label{l=1, delta large}
\end{align}

Finally, if $l>1,$ we consider the three cases: $1<l\leq \left[
2^{n+1}\delta \right] -1,$ $l=\left[ 2^{n+1}\delta \right] ,$ and $l=\left[
2^{n+1}\delta \right] +1.$ Each implies a different underlying shape of the
set over which supremum is considered. They form rectangles, a pentagon, and
a triangle respectively. We employ Lemma \ref{mod calc lemma} in each
situation.

When $1<l\leq \left[ 2^{n+1}\delta \right] -1,$ the function 
\begin{equation*}
(t,s)\longmapsto \frac{\left\vert W_{s}^{n}-W_{t}^{n}\right\vert }{g\left(
s-t\right) }
\end{equation*}%
over the rectangle $I_{k}\times I_{k+l}$ achieves its maximum at one of the
four corner points, yielding upper bound%
\begin{align}
& \mathbb{P}\left( \underset{\left\vert t-s\right\vert \leq \delta }{%
\sup_{\left( t,s\right) \in I_{k}\times I_{k+l}}}\frac{\left\vert
W_{s}^{n}-W_{t}^{n}\right\vert }{g\left( s-t\right) }>\sqrt{1+\varepsilon }%
\right)  \notag \\
& \leq \frac{\left( \frac{l-1}{2^{n+1}}\right) ^{1+\varepsilon }}{\sqrt{\pi L%
\frac{2^{n+1}}{l-1}}}+2\frac{\left( \frac{l}{2^{n+1}}\right) ^{1+\varepsilon
}}{\sqrt{\pi L\frac{2^{n+1}}{l}}}+\frac{\left( \frac{l+1}{2^{n+1}}\right)
^{1+\varepsilon }}{\sqrt{\pi L\frac{2^{n+1}}{l+1}}}  \notag \\
& \leq \frac{1}{\sqrt{\pi L\frac{1}{\delta }}}\left( \left( \frac{l-1}{%
2^{n+1}}\right) ^{1+\varepsilon }+2\left( \frac{l}{2^{n+1}}\right)
^{1+\varepsilon }+\left( \frac{l+1}{2^{n+1}}\right) ^{1+\varepsilon }\right)
.  \label{l>1, rectangle set}
\end{align}

When $l=\left[ 2^{n+1}\delta \right] ,$ the set $I_{k}\times I_{k+l}\cap
\left\{ \left( t,s\right) :\left\vert s-t\right\vert \leq \delta \right\} $
is a pentagon; thus there are five extreme points yielding 
\begin{align}
& \mathbb{P}\left( \underset{\left\vert s-t\right\vert <\delta }{%
\sup_{\left( t,s\right) \in I_{k},I_{k+\left[ 2^{n+1}\delta \right] }}}\frac{%
\left\vert W_{s}^{n}-W_{t}^{n}\right\vert }{g\left( s-t\right) }>\sqrt{%
1+\varepsilon }\right)  \notag \\
& \leq 2\frac{\left( \frac{\left[ 2^{n+1}\delta \right] }{2^{n+1}}\right)
^{1+\varepsilon }}{\sqrt{\pi L\frac{2^{n+1}}{\left[ 2^{n+1}\delta \right] }}}%
+\frac{\left( \frac{\left[ 2^{n+1}\delta \right] -1}{2^{n+1}}\right)
^{1+\varepsilon }}{\sqrt{\pi L\frac{2^{n+1}}{\left[ 2^{n+1}\delta \right] -1}%
}}+2\frac{\delta ^{1+\varepsilon }}{\sqrt{\pi L\frac{1}{\delta }}}  \notag \\
& \leq \frac{1}{\sqrt{\pi L\frac{1}{\delta }}}\left( 2\left( \frac{\left[
2^{n+1}\delta \right] }{2^{n+1}}\right) ^{1+\varepsilon }+\left( \frac{\left[
2^{n+1}\delta \right] -1}{2^{n+1}}\right) ^{1+\varepsilon }+2\delta
^{1+\varepsilon }\right) .  \label{l>1, pentagon set}
\end{align}

When $l=\left[ 2^{n+1}\delta \right] +1$, the set $I_{k}\times I_{k+l}\cap
\left\{ \left( t,s\right) :\left\vert s-t\right\vert \leq \delta \right\} $
is a triangle; thus 
\begin{equation}
\mathbb{P}\left( \underset{\left\vert s-t\right\vert <\delta }{\sup_{\left(
t,s\right) \in I_{k}\times I_{k+\left[ 2^{n+1}\delta \right] +1}}}\frac{%
\left\vert W_{s}^{n}-W_{t}^{n}\right\vert }{g\left( s-t\right) }>\sqrt{%
1+\varepsilon }\right) \leq \frac{\left( \frac{\left[ 2^{n+1}\delta \right]
-1}{2^{n+1}}\right) ^{1+\varepsilon }+2\delta ^{1+\varepsilon }}{\sqrt{\pi L%
\frac{1}{\delta }}}.  \label{l>1, triangle set}
\end{equation}

We incorporate the upper bounds obtained from inequalities \ref{l=0} and \ref%
{l=1, delta small} into \ref{nth bound (modulus)} and see, for $0<\delta
<2^{-n-1},$ 
\begin{equation*}
\mathbb{P}\left( \underset{\left\vert s-t\right\vert \leq \delta }{%
\sup_{0\leq t<s\leq 1}}\frac{\left\vert W_{s}^{n}-W_{t}^{n}\right\vert }{%
g\left( s-t\right) }>\sqrt{1+\varepsilon }\right) \leq \frac{3\delta
^{\varepsilon }}{\sqrt{\pi L\frac{1}{\delta }}}.
\end{equation*}%
From inequalities \ref{l=0} and \ref{l=1, delta large}, for $\delta \geq
2^{-n-1},$ we have%
\begin{align*}
& \mathbb{P}\left( \underset{\left\vert s-t\right\vert \leq \delta }{%
\sup_{0\leq t<s\leq 1}}\frac{\left\vert W_{s}^{n}-W_{t}^{n}\right\vert }{%
g\left( s-t\right) }>\sqrt{1+\varepsilon }\right) \\
& \leq \frac{2^{-\varepsilon \left( n+1\right) }}{\sqrt{\pi L2^{n+1}}}+\frac{%
2^{-n\varepsilon +1}}{\sqrt{\pi L2^{n}}}+2^{n+1}\sum_{l=2}^{\left[ \delta
2^{n+1}\right] +1}\mathbb{P}\left( \underset{\left\vert s-t\right\vert \leq
\delta }{\sup_{(t,s)\in I_{k}\times I_{k+l}}}\frac{\left\vert
W_{s}^{n}-W_{t}^{n}\right\vert }{g\left( s-t\right) }>\sqrt{1+\varepsilon }%
\right) .
\end{align*}%
Using inequalities \ref{l>1, rectangle set}, \ref{l>1, pentagon set} and \ref%
{l>1, triangle set}, the sum in the previous inequality can be estimated. 
\begin{align*}
& \sqrt{\pi L\frac{1}{\delta }}\sum_{l=2}^{\left[ \delta 2^{n+1}\right] +1}%
\mathbb{P}\left( \underset{\left\vert s-t\right\vert \leq \delta }{%
\sup_{(t,s)\in I_{k}\times I_{k+l}}}\frac{\left\vert
W_{s}^{n}-W_{t}^{n}\right\vert }{g\left( s-t\right) }>\sqrt{1+\varepsilon }%
\right) \\
& \leq \sum_{l=2}^{\left[ 2^{n+1}\delta \right] -1}\left( \left( \frac{l-1}{%
2^{n+1}}\right) ^{1+\varepsilon }+2\left( \frac{l}{2^{n+1}}\right)
^{1+\varepsilon }+\left( \frac{l+1}{2^{n+1}}\right) ^{1+\varepsilon }\right)
+ \\
& \text{ \ \ \ \ \ }+2\left( \frac{\left[ 2^{n+1}\delta \right] }{2^{n+1}}%
\right) ^{1+\varepsilon }+2\left( \frac{\left[ 2^{n+1}\delta \right] -1}{%
2^{n+1}}\right) ^{1+\varepsilon }+4\delta ^{1+\varepsilon } \\
& \leq \left( \sum_{l=1}^{\left[ 2^{n+1}\delta \right] }\left( \frac{l}{%
2^{n+1}}\right) ^{1+\varepsilon }+\sum_{l=2}^{\left[ 2^{n+1}\delta \right]
}2\left( \frac{l}{2^{n+1}}\right) ^{1+\varepsilon }+\sum_{l=3}^{\left[
2^{n+1}\delta \right] }\left( \frac{l}{2^{n+1}}\right) ^{1+\varepsilon
}\right) +4\delta ^{1+\varepsilon }.
\end{align*}%
Hence, for $\delta \geq 2^{-n-1},$ 
\begin{align*}
& \mathbb{P}\left( \underset{\left\vert s-t\right\vert \leq \delta }{%
\sup_{0\leq t<s\leq 1}}\frac{\left\vert W_{s}^{n}-W_{t}^{n}\right\vert }{%
g\left( s-t\right) }>\sqrt{1+\varepsilon }\right) \\
& \leq \frac{2^{-\varepsilon \left( n+1\right) }}{\sqrt{\pi L\frac{1}{\delta 
}}}\left( 1+2^{3+\varepsilon }\sqrt{1+\frac{1}{n}}+2^{n+1}\left(
4\sum_{l=1}^{\left[ 2^{n+1}\delta \right] }\left( \frac{l}{2^{n+1}}\right)
^{1+\varepsilon }+4\left( \delta \right) ^{1+\varepsilon }\right) \right) \\
& \leq \frac{2^{-\varepsilon \left( n+1\right) }}{\sqrt{\pi L\frac{1}{\delta 
}}}K\left( \varepsilon ,\delta ,n\right)
\end{align*}%
where $K\left( \varepsilon ,\delta ,n\right) =1+9\left( 2\right)
^{\varepsilon }+2\left( 2^{n+1}\delta \right) ^{2+\varepsilon }+4\left(
2^{n+1}\delta \right) ^{1+\varepsilon }.$
\end{proof}

Now we establish an estimate for the distribution function of the global
maximal deviation of the ratio of the Brownian increment and the modulus of
continuity function for a fixed $\delta $. For monotonicity as explained in
Remark \ref{restriction on n} we insist that $\delta \leq 2^{-2}$ and for
future purposes, we will need $\delta \leq 2^{-5}.$ Without this
restriction, the constant, $K_{1}\left( \varepsilon \right) $ in the theorem
below would be greater.

\begin{theorem}
\label{deviations for fixed delta (modulus)}Let $0<\delta \leq 2^{-5}$ and $%
\varepsilon >0$. Then 
\begin{equation}
\mathbb{P}\left( \underset{\left\vert s-t\right\vert \leq \delta }{%
\sup_{0\leq t<s\leq 1}}\frac{\left\vert W_{s}-W_{t}\right\vert }{g(\delta
)r\left( \delta \right) }>\sqrt{1+\varepsilon }\right) \leq K_{1}\left(
\varepsilon \right) \delta ^{\varepsilon }\left( L\frac{1}{\delta }\right) ^{%
\frac{3}{2}},  \notag
\end{equation}%
where 
\begin{equation*}
r\left( \delta \right) =\left( 1+\frac{2.65}{\sqrt{L\frac{1}{\delta }}}%
\right)
\end{equation*}%
and 
\begin{equation*}
K_{1}\left( \varepsilon \right) =27.95+\frac{0.11}{\varepsilon }\mathbf{1}%
_{(0,1)}(\varepsilon )
\end{equation*}
\end{theorem}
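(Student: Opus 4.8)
The plan is to decompose the Brownian increment into the truncated piece $W^n$ and the tail $W-W^n$, choose $n$ as a function of $\delta$, and combine Theorem~\ref{mod theorem nth partial sum} with Lemma~\ref{tail estimate}. Concretely, write $W_s-W_t=(W^n_s-W^n_t)+(W_s-W^n_s)-(W_t-W^n_t)$. For any $\sqrt{1+\varepsilon}$-type threshold we split the constant multiplicatively: if $|W^n_s-W^n_t|\le \sqrt{1+\varepsilon_1}\,g(s-t)$ for all admissible $s,t$ and simultaneously the tail contributes at most $\big(\sqrt{1+\varepsilon}-\sqrt{1+\varepsilon_1}\big)g(s-t)$, then the full increment is below $\sqrt{1+\varepsilon}\,g(s-t)$. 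The factor $r(\delta)=1+2.65/\sqrt{L\frac1\delta}$ on the right is precisely the room we buy to absorb the tail: the tail of the Lévy--Ciesielski series over the block $(t,s)\in I_{n+1,k}\times I_{n+1,k+l}$ is controlled, via the triangle-wave bounds $|\Lambda_{j,k}|\le 2^{-j-1}$ and a telescoping argument over $j\ge n+1$, by a multiple of $\sqrt{2(s-t)L\frac1{s-t}}\cdot \max_{j\ge n+1}\frac{|X_{j,k}|}{\sqrt{L2^j}}$, so that on the event of Lemma~\ref{tail estimate} (with a suitable $d$) the tail is at most $c\,g(s-t)/\sqrt{L\frac1\delta}$ with $c$ a fixed constant. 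Choosing $n=n(\delta)=\big[L\frac1\delta/L2\big]-1$ or similar makes $2^{-n-1}\asymp\delta$, so $L2^n\asymp L\frac1\delta$ and $2^{n+1}\delta\asymp 1$, which is exactly what keeps $K(\varepsilon,\delta,n)$ bounded by a constant in Theorem~\ref{mod theorem nth partial sum}.

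The steps, in order: (1) fix $\delta\le 2^{-5}$, set $n=n(\delta)$ so that $2^{-n-2}\le\delta<2^{-n-1}$ (so $\delta\ge 2^{-n-1}$ fails and we land in the first regime of Theorem~\ref{mod theorem nth partial sum}, or adjust to whichever regime gives the cleaner bound — this is where the $\delta\le 2^{-5}$ and $n\ge 4$ hypotheses get used); (2) bound $\mathbb P\big(\sup |W^n_s-W^n_t|/g(s-t)>\sqrt{1+\varepsilon}\,\big)$ — actually we need the version with a slightly smaller exponent, say replacing $1+\varepsilon$ by $1+\varepsilon'$ with $\varepsilon'$ chosen so that $\sqrt{1+\varepsilon'}=\sqrt{1+\varepsilon}/r(\delta)$ minus the tail allowance — by Theorem~\ref{mod theorem nth partial sum}, getting something of order $\delta^{\varepsilon'}/\sqrt{L\frac1\delta}$; (3) bound the tail probability by Lemma~\ref{tail estimate} with $d$ chosen ($d=1$ or $d=2$, say) large enough that $2.65/\sqrt{L\frac1\delta}$ dominates the resulting coefficient $\sqrt{2(d+1)}$ times the telescoping constant — this yields a bound of order $2^{-dn}/\sqrt{L2^n}\asymp \delta^{d}/\sqrt{L\frac1\delta}$, which is negligible; (4) convert $\delta^{\varepsilon'}$ back to $\delta^{\varepsilon}$: since $\sqrt{1+\varepsilon'}=\sqrt{1+\varepsilon}\,(1-O(1/\sqrt{L\frac1\delta}))$ we have $\varepsilon'=\varepsilon-O((1+\varepsilon)/\sqrt{L\frac1\delta})$, so $\delta^{\varepsilon'}=\delta^{\varepsilon}\cdot\delta^{-O((1+\varepsilon)/\sqrt{L\frac1\delta})}=\delta^{\varepsilon}\cdot e^{O((1+\varepsilon)\sqrt{L\frac1\delta}/\sqrt{L\frac1\delta})}=\delta^{\varepsilon}\cdot e^{O(1+\varepsilon)}$ — wait, more carefully $\delta^{-c/\sqrt{L\frac1\delta}}=e^{(c/\sqrt{L\frac1\delta})L\frac1\delta}=e^{c\sqrt{L\frac1\delta}}$, which is \emph{not} bounded, so in fact the $r(\delta)$ factor must be calibrated so that the exponent loss is $O(1/L\frac1\delta)$, not $O(1/\sqrt{L\frac1\delta})$; (5) assemble constants, tracking the $1/\varepsilon$ blow-up for small $\varepsilon$ that produces the $\frac{0.11}{\varepsilon}\mathbf 1_{(0,1)}(\varepsilon)$ term, and bound the extra factor $(L\frac1\delta)^{3/2}$ that appears because after step (2) the denominator contributes $1/\sqrt{L\frac1\delta}$ while the union bound over the $2^{n+1}\asymp 1/\delta$ starting intervals is already folded into $K$, leaving an honest power of $L\frac1\delta$ from how $n$ enters.

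Let me restate step~(4) correctly, since it is the crux. The right choice is to take $r(\delta)$ so that $\sqrt{1+\varepsilon}/r(\delta)=\sqrt{1+\varepsilon_1}$ with $\varepsilon-\varepsilon_1 = O(1/L\tfrac1\delta)$; but $r(\delta)-1\sim 2.65/\sqrt{L\frac1\delta}$ gives $\sqrt{1+\varepsilon}-\sqrt{1+\varepsilon_1}\sqrt{1+\varepsilon}\cdot 2.65/\sqrt{L\frac1\delta}$, hence $\varepsilon-\varepsilon_1\asymp\sqrt{1+\varepsilon}\cdot 2\sqrt{1+\varepsilon}\cdot 2.65/\sqrt{L\frac1\delta}=O(1/\sqrt{L\frac1\delta})$, and then $\delta^{\varepsilon_1}=\delta^{\varepsilon}\cdot\delta^{-(\varepsilon-\varepsilon_1)}=\delta^{\varepsilon}e^{(\varepsilon-\varepsilon_1)L\frac1\delta}=\delta^{\varepsilon}e^{O(\sqrt{L\frac1\delta})}$ — \emph{genuinely unbounded}. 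The resolution, which I expect is what the authors do, is that the denominator in Theorem~\ref{deviations for fixed delta (modulus)} is $g(\delta)r(\delta)$ with $g(\delta)$ \emph{evaluated at $\delta$}, not the sliding $g(s-t)$: so one first replaces $\sup|W_s-W_t|/(g(\delta)r(\delta))$ by $\sup \frac{|W_s-W_t|}{g(s-t)}\cdot\frac{g(s-t)}{g(\delta)r(\delta)}$ and uses $g(s-t)\le g(\delta)$ for $s-t\le\delta$ (monotonicity of $g$ near $0$), absorbing the ratio $g(s-t)/g(\delta)\le 1$; the $r(\delta)$ then only has to beat the additive tail contribution $c\,g(s-t)/\sqrt{L\frac1\delta}\le c\,g(\delta)/\sqrt{L\frac1\delta}$, i.e. we need $r(\delta)g(\delta)\ge (1+c/\sqrt{L\frac1\delta})g(\delta)$ with an extra $\sqrt{1+\varepsilon}$-neutral cushion — no exponent of $\delta$ is ever perturbed, so $\delta^{\varepsilon}$ survives intact and the $(L\frac1\delta)^{3/2}$ and the constants come purely from the $1/\sqrt{L\frac1\delta}$ denominators and the union-bound bookkeeping. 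The main obstacle, then, is not any single estimate but the calibration: choosing $n(\delta)$, $d$, and the split $\varepsilon\mapsto(\varepsilon_1,\text{tail margin})$ so that every piece is simultaneously controlled and the advertised constants $27.95$ and $0.11$ (and the $r$-constant $2.65$) actually come out — this is the "computationally challenging" part the introduction warns about, and it is handled by the Maple-assisted inequality chains flagged in the first remark.
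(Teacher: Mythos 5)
Your final paragraph does land on the paper's key structural idea: there is no splitting of $\sqrt{1+\varepsilon}$ between the truncated part and the tail, the denominator is $g(\delta)$ (so $g(s-t)\le g(\delta)$ is used and no exponent of $\delta$ is perturbed), and $r(\delta)$ exists solely to absorb the additive tail of the L\'evy--Ciesielski series on a good event. That is exactly how the paper argues: it defines $A_{\varepsilon,\delta}$ as the intersection of the event from Theorem~\ref{mod theorem nth partial sum} (at the \emph{same} level $\sqrt{1+\varepsilon}$, with the sliding $g(s-t)$) with the tail event of Lemma~\ref{tail estimate}, bounds $\mathbb P(A_{\varepsilon,\delta}^c)$, and then shows pathwise on $A_{\varepsilon,\delta}$ that $|W_s-W_t|\le\sqrt{1+\varepsilon}\,g(\delta)\bigl(1+2.65/\sqrt{L\frac1\delta}\bigr)$.

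However, two calibration choices in your plan are wrong, and with them the theorem as stated would not come out. First, the choice of $n$: you take $2^{-n-1}\asymp\delta$ so that $2^{n+1}\delta\asymp1$ and $K(\varepsilon,\delta,n)$ stays bounded. But the tail of the series beyond level $n$ is of size $\asymp\sum_{j\ge n+1}2^{-j/2}\sqrt j\asymp\sqrt{(n+1)2^{-n-1}}$; with your $n$ this is $\asymp\sqrt{\delta\,L\frac1\delta}$, i.e.\ a \emph{constant} multiple of $g(\delta)$, so the correction factor would be $1+O(1)$, not $1+2.65/\sqrt{L\frac1\delta}$. The paper instead truncates deeper, choosing $n$ with $(n+1)2^{-n-1}<\delta\le n2^{-n}$, so the tail is $\le 2.65\sqrt{1+\varepsilon}\sqrt{2\delta}=\sqrt{1+\varepsilon}\,2.65\,g(\delta)/\sqrt{L\frac1\delta}$; the price is $2^{n+1}\delta\asymp L\frac1\delta$, so $K(\varepsilon,\delta,n)\asymp(L\frac1\delta)^2$ is \emph{not} bounded, and this—divided by the $\sqrt{\pi L\frac1\delta}$ from Theorem~\ref{mod theorem nth partial sum}—is the true source of the $(L\frac1\delta)^{3/2}$, not union-bound bookkeeping. (Your stated source of the $(L\frac1\delta)^{3/2}$ is inconsistent with "$K$ bounded": if $K$ were bounded you would get $\delta^\varepsilon/\sqrt{L\frac1\delta}$, but then $r$ could not have the advertised form.) Second, the tail lemma parameter: you fix $d=1$ or $2$ and call the resulting $\delta^{d}$ term "negligible". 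It is not: for $\varepsilon>d$, $\delta^{d}$ dominates $\delta^{\varepsilon}(L\frac1\delta)^{3/2}$ as $\delta\to0$, so no $\delta$-free $K_1(\varepsilon)$ can absorb it; and for small $\varepsilon$ a fixed $d\ge1$ inflates the on-event tail threshold $\sqrt{2(d+1)}$ beyond what the constant $2.65$ in $r(\delta)$ can cover. The paper takes $d=\varepsilon$ (threshold $\sqrt{2(1+\varepsilon)}$), which simultaneously gives the tail probability the rate $\delta^{\varepsilon}$ (producing, via $1/(1-2^{-\varepsilon})$, the $0.11/\varepsilon$ term for $\varepsilon<1$) and lets the factor $\sqrt{1+\varepsilon}$ be pulled out of the on-event tail bound so that $r(\delta)$ is $\varepsilon$-free with constant $2.65$. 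Without these two choices the proof as proposed does not yield the stated $r(\delta)$, $\delta^{\varepsilon}$ rate, and $K_1(\varepsilon)$.
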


\begin{proof}
Let $\varepsilon >0$ and $0<\delta \leq 2^{-5}$ and $n\geq 8$, so that $%
(n+1)2^{-n-1}<\delta \leq n2^{-n}$.

The proof is completed in two steps. First we estimate the size of the set 
\begin{equation*}
A_{\varepsilon ,\delta }=\left\{ \underset{\left\vert s-t\right\vert \leq
n2^{-n}}{\sup_{0\leq t<s\leq 1}}\frac{\left\vert
W_{s}^{n}-W_{t}^{n}\right\vert }{g(t-s)}\leq \sqrt{1+\varepsilon }\right\}
\cap \left\{ \underset{0\leq k\leq 2^{j}-1}{\max_{j\geq n+1}}\frac{%
\left\vert X_{j,k}\right\vert }{\sqrt{L2^{j}}}\leq \sqrt{2\left(
1+\varepsilon \right) }\right\}
\end{equation*}%
using both Theorem \ref{mod theorem nth partial sum} and Lemma \ref{tail
estimate}. Then we show that on $A_{\varepsilon ,\delta }$, 
\begin{equation*}
\left\vert W_{s}-W_{t}\right\vert \leq g\left( \delta \right) r\left( \delta
\right) \sqrt{1+\varepsilon }
\end{equation*}%
for all $\left\vert s-t\right\vert \leq \delta .$

By Theorem \ref{mod theorem nth partial sum} and Lemma \ref{tail estimate},
we have 
\begin{align*}
\mathbb{P}\left( A_{\varepsilon ,\delta }^{c}\right) & \leq \frac{%
2^{-\varepsilon \left( n+1\right) }}{\sqrt{\pi L\frac{1}{\delta }}}\left(
K\left( \varepsilon ,\delta ,n\right) +\frac{1}{\left( 1-2^{-\varepsilon
}\right) }\right) \\
& \leq \frac{2^{-\varepsilon \left( n+1\right) }}{\sqrt{\pi L\frac{1}{\delta 
}}}\left( 1+9\left( 2\right) ^{\varepsilon }+2\left( 2^{n+1}\delta \right)
^{2+\varepsilon }+4\left( 2^{n+1}\delta \right) ^{1+\varepsilon }+\frac{1}{%
\left( 1-2^{-\varepsilon }\right) }\right) \\
& \leq \frac{\delta ^{\varepsilon }}{\sqrt{\pi L\frac{1}{\delta }}}\left(
4\left( 2^{n+1}\delta \right) +2\left( 2^{n+1}\delta \right) ^{2}+\frac{%
1+9\left( 2\right) ^{\varepsilon }+\frac{1}{\left( 1-2^{-\varepsilon
}\right) }}{\left( 2^{n+1}\delta \right) ^{\varepsilon }}\right) \\
& \leq \,c\delta ^{\varepsilon }\left( L\frac{1}{\delta }\right) ^{\frac{3}{2%
}}\left( 4\left( 2^{4}\right) +2\left( 2^{4}\right) ^{2}+\frac{1+9\left(
2\right) ^{\varepsilon }+\frac{1}{\left( 1-2^{-\varepsilon }\right) }}{%
\left( n+1\right) ^{\varepsilon }}\right)
\end{align*}%
since $L\frac{1}{\delta }\geq L2^{n}$ and where $c=\left( \sqrt{\pi }\left(
L2^{5}\right) ^{2}\right) ^{-1}.$

To eliminate the dependency on $n$ in our above estimate of $\mathbb{P}%
\left( A_{\varepsilon ,\delta }^{c}\right) ,$ we consider $0<\varepsilon <1$
and $\varepsilon \geq 1.$ If $0<\varepsilon <1$, then 
\begin{equation*}
\mathbb{P}\left( A_{\varepsilon ,\delta }^{c}\right) \leq c\delta
^{\varepsilon }\left( L\frac{1}{\delta }\right) ^{\frac{3}{2}}\left(
2^{6}+2^{9}+10+\frac{2\varepsilon ^{-1}}{L2\left( 2-L2\right) }\right)
\end{equation*}%
and if $\varepsilon \geq 1,$ then 
\begin{equation*}
\mathbb{P}\left( A_{\varepsilon ,\delta }^{c}\right) \leq c\delta
^{\varepsilon }\left( L\frac{1}{\delta }\right) ^{\frac{3}{2}}\left(
2^{6}+2^{9}+\frac{1}{3}+9\left( \frac{2}{9}\right) ^{\varepsilon }\right) .
\end{equation*}%
Thus for all $\varepsilon >0$, we have 
\begin{equation*}
\mathbb{P}\left( A_{\varepsilon ,\delta }^{c}\right) \leq \left( 27.95+\frac{%
0.11}{\varepsilon }\mathbf{1}_{(0,1)}(\varepsilon )\right) \delta
^{\varepsilon }\left( L\frac{1}{\delta }\right) ^{\frac{3}{2}},
\end{equation*}

Next we estimate $\left\vert W_{s}-W_{t}\right\vert $ on the set $%
A_{\varepsilon ,\delta }.$ By Theorem \ref{mod theorem nth partial sum} and
Lemma \ref{tail estimate} and recalling that $n\geq 8$, we note that $W$
restricted to $A_{\varepsilon ,\delta }$ yields 
\begin{align}
\left\vert W_{s}-W_{t}\right\vert & \leq \left\vert
W_{s}^{n}-W_{t}^{n}\right\vert +\left\vert \sum_{j=n+1}^{\infty
}2^{-j/2}\sum_{k=0}^{2^{j}-1}\left[ \Lambda _{j,k}\left( t\right) -\Lambda
_{j,k}\left( s\right) \right] X_{j,k}\right\vert  \notag \\
& \leq g\left( s-t\right) \sqrt{1+\varepsilon }+2\sum_{j=n+1}^{\infty }\frac{%
2^{-j/2}\sqrt{j}}{2}\underset{0\leq k<2^{j}-1}{\max_{j\geq n+1}}\frac{%
\left\vert X_{j,k}\right\vert }{\sqrt{j}}  \notag \\
& \leq \sqrt{1+\varepsilon }\left( g\left( s-t\right) +\sqrt{2L2}%
\sum_{j=n+1}^{\infty }2^{-j/2}\sqrt{j}\right)  \notag \\
& \leq \sqrt{1+\varepsilon }\left( g\left( s-t\right) +\sqrt{2L2}\sqrt{\frac{%
n+1}{2^{n+1}}}\sum_{j=0}^{\infty }2^{-j/2}\sqrt{1+\frac{j}{9}}\right)  \notag
\\
& \leq \sqrt{1+\varepsilon }\left( g(s-t)+2.65\sqrt{2\frac{n+1}{2^{n+1}}}%
\right) .  \label{tail and truncated part}
\end{align}%
since 
\begin{equation*}
\sqrt{L2}\sum_{j=0}^{\infty }2^{-j/2}\sqrt{1+\frac{j}{9}}\leq 2.65.
\end{equation*}%
Recall $\frac{n+1}{2^{n+1}}<\delta $ and that the inequality \ref{tail and
truncated part} holds for all $\left\vert s-t\right\vert \leq \delta $, we
have 
\begin{equation*}
\sup_{\left\vert s-t\right\vert \leq \delta }\frac{\left\vert
W_{t}-W_{s}\right\vert }{g\left( \delta \right) }\leq \sqrt{1+\varepsilon }%
\left( 1+\frac{2.65}{\sqrt{L\frac{1}{\delta }}}\right)
\end{equation*}%
on the set $A_{\varepsilon ,\delta }$ whose probability is greater than 
\begin{equation*}
1-K_{1}\left( \varepsilon \right) \delta ^{\varepsilon }\left( L\frac{1}{%
\delta }\right) ^{\frac{3}{2}}.
\end{equation*}
\end{proof}

For practical purposes, results uniformly over $\delta $ are of interest,
thus the results of the previous theorem are not as desirable. Moreover, the
function 
\begin{equation*}
\delta \rightarrow \sup_{\left\vert s-t\right\vert \leq \delta }\frac{%
\left\vert W_{t}-W_{s}\right\vert }{g\left( \delta \right) }
\end{equation*}%
is not necessarily monotonic which make establishing uniform results
challenging. The theorem below addresses this need and challenge. We should
note that its proof is similar to the proof of Theorem \ref{deviations for
fixed delta (modulus)}, and it yields the same rate in $\delta .$
Additionally, it may seem at first glance, the expressions $K_{1}$ and $%
K_{2} $ in the these two theorems may look different. However, they really
only differ near $\varepsilon $ zero. $K_{1},$ above behaves as $\varepsilon
^{-1} $ near zero while $K_{2}$ behaves as $\varepsilon ^{-3}$. Moreover
their corresponding multipliers are extremely different as well with the
coefficient of the $\varepsilon ^{-1}$ being a hundredth of $\varepsilon
^{-3}$. But as $\varepsilon $ moves away from zero, the two behave basically
the same.

\begin{theorem}
\label{uniform mod theorem}Let $0<\delta _{o}\leq 2^{-5}$ and $\varepsilon
>0 $. Then 
\begin{equation*}
\mathbb{P}\left( \sup_{\delta \leq \delta _{o}}\underset{\left\vert
s-t\right\vert \leq \delta }{\sup_{0\leq t<s\leq 1}}\frac{\left\vert
W_{s}-W_{t}\right\vert }{g(\delta )r\left( \delta \right) }>\sqrt{%
1+\varepsilon }\right) \leq K_{2}\left( \varepsilon \right) \delta
_{o}^{\varepsilon }\left( L\frac{1}{\delta _{o}}\right) ^{\frac{3}{2}}\text{,%
}
\end{equation*}%
where 
\begin{equation*}
\frac{9.57}{\varepsilon ^{3}}\mathbf{1}_{(0,2a]}(\varepsilon )+\left( \frac{%
14.59}{\varepsilon }+9.9\right) \mathbf{1}_{(2a,\infty ]}(\varepsilon
)+24.05,
\end{equation*}%
where $a=\left( 8L2-1\right) ^{-1}$ and $r\left( \delta \right) $ is as in
Theorem \ref{deviations for fixed delta (modulus)}.
\end{theorem}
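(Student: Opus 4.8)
The plan is to mimic the proof of Theorem \ref{deviations for fixed delta (modulus)} but to replace the event involving the truncated increment with one that is already uniform in $\delta$, so that the supremum over $\delta\leq\delta_o$ can be absorbed for free. Concretely, fix $n\geq 8$ so that $(n+1)2^{-n-1}<\delta_o\leq n2^{-n}$, and introduce the event
\begin{equation*}
A_{\varepsilon,\delta_o}=\left\{\sup_{\delta\leq n2^{-n}}\ \underset{\left\vert s-t\right\vert\leq\delta}{\sup_{0\leq t<s\leq1}}\frac{\left\vert W_s^n-W_t^n\right\vert}{g(s-t)}\leq\sqrt{1+\varepsilon}\right\}\cap\left\{\underset{0\leq k\leq2^j-1}{\max_{j\geq n+1}}\frac{\left\vert X_{j,k}\right\vert}{\sqrt{L2^j}}\leq\sqrt{2(1+\varepsilon)}\right\}.
\end{equation*}
The first key observation is that the inner double supremum $\sup_{|s-t|\leq\delta}|W_s^n-W_t^n|/g(s-t)$, taken over $\delta\leq n2^{-n}$, is \emph{already} the supremum over $|s-t|\leq n2^{-n}$, because enlarging $\delta$ only enlarges the feasible set; hence the first event in $A_{\varepsilon,\delta_o}$ is exactly the first event appearing in the proof of Theorem \ref{deviations for fixed delta (modulus)} (with $\delta$ replaced by $n2^{-n}\geq\delta_o$), and Theorem \ref{mod theorem nth partial sum} together with Lemma \ref{tail estimate} bounds $\mathbb{P}(A_{\varepsilon,\delta_o}^c)$ by essentially the same expression as before, now in terms of $\delta_o$. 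The second observation is that on $A_{\varepsilon,\delta_o}$ the tail estimate \eqref{tail and truncated part} holds \emph{simultaneously} for all $|s-t|\leq\delta_o$, giving
\begin{equation*}
\left\vert W_s-W_t\right\vert\leq\sqrt{1+\varepsilon}\left(g(s-t)+2.65\sqrt{2\tfrac{n+1}{2^{n+1}}}\right)
\end{equation*}
for every such pair; dividing by $g(\delta)$ and using $g(s-t)\leq g(\delta)$ for $|s-t|\leq\delta$ and $\tfrac{n+1}{2^{n+1}}<\delta_o\leq\delta$ only where it helps — actually one must be careful here, see below — yields the desired bound with $r(\delta)$ as defined.

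The steps, in order: (i) choose $n$ as above and define $A_{\varepsilon,\delta_o}$; (ii) identify the $\delta$-supremum of the truncated ratio with the single supremum at $\delta=n2^{-n}$ and invoke Theorem \ref{mod theorem nth partial sum} at that level plus Lemma \ref{tail estimate} to bound $\mathbb{P}(A_{\varepsilon,\delta_o}^c)$, then split into $0<\varepsilon\leq 2a$ and $\varepsilon>2a$ exactly as before to remove the $n$-dependence and extract the coefficient $K_2(\varepsilon)$; (iii) on $A_{\varepsilon,\delta_o}$ run the chain \eqref{tail and truncated part} uniformly in $|s-t|\leq\delta_o$; (iv) for each $\delta\leq\delta_o$ and each $|s-t|\leq\delta$, bound $|W_s-W_t|\leq\sqrt{1+\varepsilon}\,(g(\delta)+2.65\sqrt{2(n+1)2^{-n-1}})$ using $g(s-t)\leq g(\delta)$, and then convert the additive tail term into the multiplicative factor $1+2.65/\sqrt{L\tfrac1\delta}$; (v) conclude by taking $\sup_{\delta\leq\delta_o}\sup_{|s-t|\leq\delta}$ on the left.

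The main obstacle is step (iv)–(v): unlike the fixed-$\delta$ case, the additive error $2.65\sqrt{2(n+1)2^{-n-1}}$ is tied to the fixed level $n$ chosen from $\delta_o$, but we need the \emph{multiplicative} factor $r(\delta)=1+2.65/\sqrt{L\tfrac1\delta}$ with the \emph{running} $\delta$, which is larger than $r(\delta_o)$. One must check that $2.65\sqrt{2(n+1)2^{-n-1}}\leq 2.65\,g(\delta)/\sqrt{L\tfrac1\delta}$ holds for every $\delta$ in the relevant range, i.e. that $(n+1)2^{-n-1}\leq\delta$ after the change of variables — this is where the restriction $\delta_o\leq 2^{-5}$ (equivalently $n\geq 8$) and the monotonicity noted in Remark \ref{restriction on n} and Lemma \ref{mod calc lemma} are used. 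A secondary nuisance is bookkeeping the constants: because we now evaluate $K(\varepsilon,\delta_o,n)$ and the geometric-tail term $\tfrac{1}{1-2^{-\varepsilon}}$ at $\delta_o$ rather than at a generic $\delta$, and because the split point is taken at $2a$ with $a=(8L2-1)^{-1}$ (the value of $\varepsilon$ at which $(2^{n+1}\delta_o)^{-\varepsilon}$ and the $\varepsilon^{-1}$-type term balance when $n=8$), the numerical optimization of $27.95$ into the three-piece $K_2(\varepsilon)$ requires the same Maple-assisted estimates flagged in the opening remark; the $\varepsilon^{-3}$ behaviour near zero arises from combining the $\varepsilon^{-1}$ from $\tfrac{1}{1-2^{-\varepsilon}}$ with an extra $\varepsilon^{-2}$ lost in handling the uniform-in-$\delta$ passage. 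I expect no genuinely new idea is needed beyond Theorems \ref{mod theorem nth partial sum}, the two lemmas, and the observation that the truncated-process supremum is automatically monotone once one reaches the top dyadic level.
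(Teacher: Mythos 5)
There is a genuine gap, and it sits exactly at the point you flag as "the main obstacle" and then wave away. Your event controls the truncated process only at the single level $n$ fixed by $\delta_o$, so on your $A_{\varepsilon,\delta_o}$ the tail beyond level $n$ contributes an additive error of order $2.65\sqrt{2(n+1)2^{-n-1}}\approx 2.65\sqrt{2\delta_o}$, uniformly in $s,t$. To convert this into the multiplicative correction $2.65/\sqrt{L\frac{1}{\delta}}$, i.e.\ into $2.65\,g(\delta)/\sqrt{L\frac{1}{\delta}}=2.65\sqrt{2\delta}$, you need $(n+1)2^{-n-1}\leq\delta$ for the \emph{running} $\delta$. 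The hypothesis $\delta_o\leq 2^{-5}$ only gives $(n+1)2^{-n-1}<\delta_o$; for $\delta<(n+1)2^{-n-1}$ (and $\delta$ ranges all the way down to $0$) the inequality fails, and indeed for small $\delta$ the fixed-level tail, of size comparable to $\sqrt{\delta_o}$, swamps $g(\delta)r(\delta)\to 0$. No sharpening of the tail estimate at the fixed level $n$ can rescue this: even using the Lipschitz bound $|\Lambda_{j,k}(t)-\Lambda_{j,k}(s)|\leq\min(2^j|s-t|,1/2)$ one only gets the tail bounded by a constant multiple of $g(|s-t|)$, i.e.\ a multiplicative factor bounded away from $1$, not the factor $r(\delta)=1+2.65/\sqrt{L\frac{1}{\delta}}$ which tends to $1$. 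The truncation level must be adapted to the running $\delta$, and that is precisely the idea your proposal is missing.

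The paper's proof does exactly this: its event $A_{\varepsilon,\delta_o}$ intersects, \emph{for every} $m\geq n+1$, the sets $\bigl\{\sup_{|s-t|\leq m2^{-m}}|W_s^m-W_t^m|/g(s-t)\leq\sqrt{1+\varepsilon}\bigr\}$ (together with the level-$n$ set restricted to $(n+1)2^{-n-1}<|s-t|\leq\delta_o$ and the coefficient-tail set), and then, for a given $\delta\leq\delta_o$, chooses the smallest $m$ with $\delta\leq m2^{-m}$ and applies the estimate \ref{tail and truncated part} at that level, which is what produces $r(\delta)$ with the running $\delta$. The price of this multi-level control is that $\mathbb{P}(A_{\varepsilon,\delta_o}^c)$ involves a sum over $m\geq n+1$ of the bounds from Theorem \ref{mod theorem nth partial sum} evaluated at $\delta_m=m2^{-m}$ (the terms $S_2$ and $S_3$), whose estimation via the series $I_1(\varepsilon),I_2(\varepsilon)$ and the attendant integral tricks is where the constant $24.05$, the split at $2a$, and the $\varepsilon^{-3}$ behaviour of $K_2$ actually come from; your proposal avoids this summation only because it omits the uniform-over-$m$ control, and that omission is fatal to reaching the stated conclusion. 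So the needed new ingredient relative to Theorem \ref{deviations for fixed delta (modulus)} is not "the truncated-process supremum is automatically monotone" (true but insufficient), but the simultaneous control of all truncation levels below $n$ together with the summability analysis of the resulting bounds.
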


\begin{proof}
Let $\delta _{o}\leq 2^{-5}$ and $n$ be such that $\left( n+1\right)
2^{-n-1}<\delta _{o}\leq n2^{-n.}.$ Our choice of $\delta _{o}$ forces $%
n\geq 8.$ Set 
\begin{eqnarray*}
A_{\varepsilon ,\delta _{o}} &=&\left\{ \underset{\left( n+1\right)
2^{-n-1}<\left\vert s-t\right\vert \leq \delta _{o}}{\sup_{0\leq t<s\leq 1}}%
\frac{\left\vert W_{s}^{n}-W_{t}^{n}\right\vert }{g(t-s)}\leq \sqrt{%
1+\varepsilon }\right\}  \\
&&\cap \left\{ \sup_{m\geq n+1}\underset{\left\vert s-t\right\vert \leq
m2^{-m}}{\sup_{0\leq t<s\leq 1}}\frac{\left\vert
W_{s}^{m}-W_{t}^{m}\right\vert }{g(t-s)}\leq \sqrt{1+\varepsilon }\right\} 
\\
&&\cap \left\{ \underset{0\leq k\leq 2^{j}-1}{\max_{j>n}}\frac{\left\vert
X_{j,k}\right\vert }{\sqrt{L2^{j}}}\leq \sqrt{2\left( 1+\varepsilon \right) }%
\right\} 
\end{eqnarray*}%
and define $\delta _{m}=m2^{-m}.$ Define $S_{1,}$ $S_{2,}$ and $S_{2}$ so
that $\mathbb{P}\left( A_{\varepsilon ,\delta _{o}}^{c}\right) \leq
S_{1}+S_{2}+S_{3}$ and the following holds. The first term, $S_{1},$ is
derived from the first set used to create $A_{\varepsilon ,\delta _{o}}.$
The upper bound of its compliment is determined in the same fashion as the
proof of Theorem \ref{mod theorem nth partial sum} for $\delta >2^{-n-1}$.
Specifically, 
\begin{equation*}
S_{1}\leq \frac{1}{\sqrt{\pi L\frac{1}{\delta _{o}}}}\left( 2^{n+1}\left(
4\sum_{l=n+2}^{\left[ 2^{n+1}\delta _{o}\right] }\left( \frac{l}{2^{n+1}}%
\right) ^{1+\varepsilon }+4\left( \delta _{o}\right) ^{1+\varepsilon
}\right) \right) 
\end{equation*}%
since $\left( n+1\right) 2^{-n-1}<\delta _{o}.$ we could approximate this
sum by an integral which would have resulted in a constant divided by $%
2+\varepsilon .$ This would be beneficial in practice when $\varepsilon $ is
large. However the constants would more than double. Instead, we replace the
sum with its greatest summand and see that 
\begin{eqnarray}
S_{1} &\leq &\frac{\left( L\frac{1}{\delta _{n}}\right) ^{2}}{\sqrt{\pi L%
\frac{1}{\delta _{o}}}}\frac{4n\left( 2^{n+1}\delta _{o}\right)
^{1+\varepsilon }}{\left( L\frac{1}{\delta _{n}}\right)
^{2}2^{(n+1)\varepsilon }}  \notag \\
&\leq &\frac{\left( L\frac{1}{\delta _{o}}\right) ^{\frac{3}{2}}}{\sqrt{\pi }%
}\frac{8n^{2}\delta _{0}^{\varepsilon }}{\left( nL2-Ln\right) ^{2}}  \notag
\\
&\leq &\frac{\delta _{o}^{\varepsilon }\left( L\frac{1}{\delta _{0}}\right)
^{\frac{3}{2}}}{\sqrt{\pi }}\frac{8}{\left( L2-\frac{Ln}{n}\right) ^{2}} 
\notag \\
&\leq &\delta _{o}^{\varepsilon }\left( L\frac{1}{\delta _{o}}\right) ^{%
\frac{3}{2}}\frac{8}{\sqrt{\pi }\left( L2-\frac{L8}{8}\right) ^{2}}%
<24.05\delta _{o}^{\varepsilon }\left( L\frac{1}{\delta _{o}}\right) ^{\frac{%
3}{2}}.  \label{s1epsilonany}
\end{eqnarray}

$S_{2}$ and $S_{3}$ are derived from the last two sets used to create $%
A_{\varepsilon ,\delta _{o}}.$ Specifically, according to Theorem \ref{mod
theorem nth partial sum} and Lemma \ref{tail estimate}, 
\begin{eqnarray*}
&&P\left( A_{\varepsilon ,\delta _{o}}\backslash \left\{ \underset{%
\left\vert s-t\right\vert \leq \delta _{o}}{\sup_{0\leq t<s\leq 1}}\frac{%
\left\vert W_{s}^{n}-W_{t}^{n}\right\vert }{g(t-s)}\leq \sqrt{1+\varepsilon }%
\right\} \right) \\
&\leq &\sum_{m=n+1}^{\infty }\frac{2^{-\varepsilon \left( m+1\right)
}K\left( \varepsilon ,\delta _{m},m\right) }{\sqrt{\pi L\frac{1}{\delta _{m}}%
}}+\frac{2^{-\varepsilon \left( n+1\right) }}{\left( 1-2^{-\varepsilon
}\right) \sqrt{\pi L2^{n+1}}} \\
&\leq &\sum_{m=n+1}^{\infty }\frac{2^{-\varepsilon \left( m+1\right) }}{%
\sqrt{\pi L\frac{1}{\delta _{m}}}}\left( 1+9\left( 2\right) ^{\varepsilon
}+2\left( 2^{m+1}\delta _{m}\right) ^{2+\varepsilon }+4\left( 2^{m+1}\delta
_{m}\right) ^{1+\varepsilon }\right) \\
&&+\frac{2^{-\varepsilon \left( n+1\right) }}{\left( 1-2^{-\varepsilon
}\right) \sqrt{\pi L2^{n+1}}} \\
&\leq &\sum_{m=n+1}^{\infty }\frac{2^{-\varepsilon \left( m+1\right) }}{%
\sqrt{\pi L\frac{1}{\delta _{m}}}}\left( 2\left( 2^{m+1}\delta _{m}\right)
^{2+\varepsilon }+4\left( 2^{m+1}\delta _{m}\right) ^{1+\varepsilon }\right)
+ \\
&&\sum_{m=n+1}^{\infty }\frac{2^{-\varepsilon \left( m+1\right) }}{\sqrt{\pi
L\frac{1}{\delta _{m}}}}\left( 1+9\left( 2\right) ^{\varepsilon }\right) +%
\frac{2^{-\varepsilon \left( n+1\right) }}{\left( 1-2^{-\varepsilon }\right) 
\sqrt{\pi L2^{n+1}}} \\
&=&S_{2}+S_{3}
\end{eqnarray*}

Consider $S_{2}$ by looking at the sum below for $k=1,2.$ 
\begin{equation*}
\sum_{m=n+1}^{\infty }2^{-\varepsilon m}m^{k+\varepsilon }\leq
2^{-\varepsilon \left( n+1\right) }\left( n+1\right) ^{k+\varepsilon
}\sum_{m=0}^{\infty }2^{-\varepsilon m}\left( 1+\frac{m}{8}\right)
^{k+\varepsilon }.
\end{equation*}%
Define 
\begin{equation}
I_{k}(\varepsilon )=\sum_{m=0}^{\infty }2^{-\varepsilon m}\left( 1+\frac{m}{8%
}\right) ^{k+\varepsilon }  \label{series}
\end{equation}%
for $k=1,2.$ Although we could use $I_{1}(\varepsilon )\leq
I_{2}(\varepsilon ),$ we estimate both, $I_{1}(\varepsilon )$ and $%
I_{2}(\varepsilon )$ to obtain better constants. The function $%
f_{k}:[0,\infty )\rightarrow \mathbb{R}$ defined by 
\begin{equation}
f_{k}(x)=\left( 1+\frac{x}{8}\right) ^{\varepsilon +k}2^{-\varepsilon x}
\label{series terms}
\end{equation}%
has only one maximum which is achieved at 
\begin{equation*}
x_{o}=8\left( \frac{k+\varepsilon }{8\varepsilon L2}-1\right)
\end{equation*}%
provided $\varepsilon \leq \frac{k}{8L2-1};$ otherwise, the maximum appears
at $x=0$. Therefore when $\varepsilon \leq \frac{k}{8L2-1}=ak,$ where $%
a=\left( 8L2-1\right) ^{-1},$ we have 
\begin{eqnarray*}
I_{k}(\varepsilon ) &\leq &\int_{0}^{\left\lfloor x_{o}\right\rfloor }\left(
1+\frac{x}{8}\right) ^{k+\varepsilon }2^{-\varepsilon x}dx+\int_{\left\lceil
x_{o}\right\rceil }^{\infty }\left( 1+\frac{x-1}{8}\right) ^{k+\varepsilon
}2^{-\varepsilon \left( x-1\right) }dx+f_{k}(x_{o}) \\
&\leq &\int_{0}^{\infty }\left( 1+\frac{x}{8}\right) ^{k+\varepsilon
}2^{-\varepsilon x}dx+f_{k}(x_{0}).
\end{eqnarray*}%
Otherwise, when $\varepsilon >ak,$%
\begin{equation*}
I_{k}(\varepsilon )\leq \int_{0}^{\infty }\left( 1+\frac{x}{8}\right)
^{k+\varepsilon }2^{-\varepsilon x}dx+1.
\end{equation*}%
Thus we can combine both cases and create the upper bound 
\begin{equation*}
I_{k}(\varepsilon )\leq \int_{0}^{\infty }\left( 1+\frac{x}{8}\right)
^{k+\varepsilon }2^{-\varepsilon x}dx+f_{k}(x_{0})\mathbf{1}%
_{(0,a]}(\varepsilon )+\mathbf{1}_{(a,\infty )}(\varepsilon ).
\end{equation*}

One remark regarding $f_{k}\left( x_{o}\right) $ which we will need later on
for $\varepsilon \leq ak$ is that 
\begin{eqnarray*}
f_{k}(x_{o}) &=&\left( \frac{k+\varepsilon }{8\varepsilon L2}\right)
^{k+\varepsilon }2^{-8\left( \frac{k+\varepsilon }{8L2}-\varepsilon \right) }
\\
&\leq &\left( \frac{1}{\varepsilon }\right) ^{k+\varepsilon }\left( \frac{%
k+ak}{8L2}\right) ^{k+ak},
\end{eqnarray*}%
and thus, 
\begin{equation*}
f_{1}(x_{o})=\frac{0.16}{\varepsilon ^{1+\varepsilon }}\text{ and }%
f_{2}(x_{o})=\frac{0.14}{\varepsilon ^{2+\varepsilon }}.
\end{equation*}

In order to simplify the integral that appears in $I_{k}\left( \varepsilon
\right) ,$, we substitute $z=8\varepsilon L2\left( 1+\frac{x}{8}\right) .$
Then 
\begin{eqnarray}
&&\int_{0}^{\infty }\left( 1+\frac{x}{8}\right) ^{k+\varepsilon
}2^{-\varepsilon x}dx  \notag \\
&=&\frac{2^{8\varepsilon }}{\varepsilon ^{k+\varepsilon +1}\left( 8L2\right)
^{k+\varepsilon }\left( L2\right) }\int_{8\varepsilon L2}^{\infty
}z^{k+\varepsilon }e^{-z}dz.  \label{Iksubstituteintegral}
\end{eqnarray}

Since integral \ref{Iksubstituteintegral} is not easily integrated, we
employ a few tricks. Note 
\begin{equation}
z^{k+\varepsilon }e^{-z}=z^{k+\varepsilon }e^{-\frac{z}{b}}e^{-\left( 1-%
\frac{1}{b}\right) z}  \label{trick}
\end{equation}%
for all $b\neq 0.$ The idea is that we will replace the first two factors of %
\ref{trick} with their maximums, and integrate the last factor of \ref{trick}%
. The function 
\begin{equation*}
z\rightharpoonup z^{k+\varepsilon }e^{-\frac{z}{b}}
\end{equation*}%
attains maximum when $z=b\left( k+\varepsilon \right) $ for any $b>0$ and
the last factor is integrable if $b>1.$ The best upper bound would result by
choosing the minimal $b$ however, our choice of $b=16L2/\left( 8L2+1\right) $
produces constants which are easy to manipulate and not far from the minimal
constants..

Now we compute $I_{1}\left( \varepsilon \right) .$%
\begin{equation*}
I_{1}\left( \varepsilon \right) \leq \frac{1}{\varepsilon ^{\varepsilon
+2}\left( L2\right) ^{\varepsilon +2}2^{3-5\varepsilon }}\left(
\int_{8\varepsilon L2}^{\infty }z^{1+\varepsilon }e^{-z}dz\right)
+f_{k}(x_{0})\mathbf{1}_{(0,a]}(\varepsilon )+\mathbf{1}_{(a,\infty
)}(\varepsilon )
\end{equation*}%
Since we eventually combine results with $I_{2}$, we only consider the cases 
$\varepsilon \leq 2a$ and $\varepsilon >2a$ instead of $\varepsilon \leq a$
and $\varepsilon >a$. When $\varepsilon \leq 2a.$ 
\begin{eqnarray*}
I_{1}\left( \varepsilon \right) &\leq &\frac{1}{\varepsilon ^{\varepsilon
+2}\left( L2\right) ^{\varepsilon +2}2^{3-5\varepsilon }}\left(
\int_{8\varepsilon L2}^{\infty }z^{1+\varepsilon }e^{-z}dz\right) +\max
\left( f_{1}(x_{o}),1\right) \\
&\leq &\frac{1}{\varepsilon ^{\varepsilon +2}}\left( \left( \frac{32}{\left(
8L2+1\right) e^{1}}\right) ^{1+\varepsilon }\left( 1+\varepsilon \right)
^{1+\varepsilon }\frac{2^{\frac{\varepsilon }{2L2}}}{8L2-1}+\max \left(
0.16\varepsilon ,\varepsilon ^{\varepsilon +2}\right) \right) \\
&\leq &\frac{1.15}{\varepsilon ^{\varepsilon +2}}
\end{eqnarray*}%
It should be clear that the second factor in the second to the last line
above is increasing in $\varepsilon $; thus, the final result is obtained by
substituting $2a$ for $\varepsilon $.

For $\varepsilon >2a,$ computation is much easier since the maximum of the
function 
\begin{equation*}
z\rightarrow z^{1+\varepsilon }e^{-\frac{z}{b}}
\end{equation*}%
occurs before the lower limit of integration. Therefore, 
\begin{eqnarray*}
I_{1}\left( \varepsilon \right) &\leq &\frac{1}{\varepsilon ^{\varepsilon
+2}\left( L2\right) ^{\varepsilon +2}2^{3-5\varepsilon }}\left(
\int_{8\varepsilon L2}^{\infty }z^{1+\varepsilon }e^{-z}dz\right) +1 \\
&\leq &\frac{\left( 8\varepsilon L2\right) ^{1+\varepsilon }e^{-\frac{%
8\varepsilon L2}{b}}}{\varepsilon ^{\varepsilon +2}\left( L2\right)
^{\varepsilon +2}2^{3-5\varepsilon }}\int_{8\varepsilon L2}^{\infty
}e^{-\left( 1-\frac{1}{b}\right) z}dz+1 \\
&\leq &\frac{1}{\varepsilon L2}+1.
\end{eqnarray*}

For $k=2,$ we repeat the process. For $\varepsilon \leq 2a,$ 
\begin{eqnarray*}
I_{2}\left( \varepsilon \right) &\leq &\frac{1}{\varepsilon ^{\varepsilon
+3}\left( L2\right) ^{\varepsilon +3}2^{6-5\varepsilon }}\left(
\int_{8\varepsilon L2}^{\infty }z^{2+\varepsilon }e^{-z}dz\right)
+f_{2}(x_{0}) \\
&\leq &\frac{0.70}{\varepsilon ^{\varepsilon +3}}.
\end{eqnarray*}

And $\varepsilon >2a,$%
\begin{eqnarray*}
I_{2}\left( \varepsilon \right) &\leq &\frac{1}{\varepsilon ^{\varepsilon
+3}\left( L2\right) ^{\varepsilon +3}2^{6-5\varepsilon }}\left(
\int_{8\varepsilon L2}^{\infty }z^{2+\varepsilon }e^{-z}dz\right) +1 \\
&\leq &\frac{1}{\varepsilon L2}+1.
\end{eqnarray*}

We are now ready to bound $S_{2}.$%
\begin{eqnarray*}
S_{2} &=&\frac{4\sum_{m=n+1}^{\infty }2^{-\varepsilon m}m^{1+\varepsilon
}+2\sum_{m=n+1}^{\infty }2^{-\varepsilon m}m^{2+\varepsilon }}{\sqrt{\pi L%
\frac{1}{\delta _{n+1}}}} \\
&\leq &\frac{4\left( 2^{-\varepsilon \left( n+1\right) }\left( n+1\right)
^{1+\varepsilon }\right) I_{1}\left( \varepsilon \right) +2\left(
2^{-\varepsilon \left( n+1\right) }\left( n+1\right) ^{2+\varepsilon
}\right) I_{2}\left( \varepsilon \right) }{\sqrt{\pi L\frac{1}{\delta _{n+1}}%
}} \\
&\leq &\delta _{o}^{\varepsilon }\left( L\frac{1}{\delta _{o}}\right) ^{%
\frac{3}{2}}\left( \frac{4\left( n+1\right) I_{1}\left( \varepsilon \right)
+2\left( n+1\right) ^{2}I_{2}\left( \varepsilon \right) }{\sqrt{\pi L\frac{1%
}{\delta _{n+1}}}\left( L\frac{1}{\delta _{n}}\right) ^{\frac{3}{2}}}\right)
.
\end{eqnarray*}%
We bound by substituting and get 
\begin{equation*}
S_{2}\leq \delta _{o}^{\varepsilon }\left( L\frac{1}{\delta _{o}}\right) ^{%
\frac{3}{2}}\left( \frac{\frac{4I_{1}\left( \varepsilon \right) }{9}%
+2I_{2}\left( \varepsilon \right) }{\sqrt{\pi }\left( L2-\frac{1+L8}{9}%
\right) ^{\frac{3}{2}}\left( \ln 2-\frac{\ln 9}{9}\right) ^{\frac{1}{2}}}%
\right)
\end{equation*}%
We bring in results for $I_{1}$ and $I_{2}$ and see for $\varepsilon \leq 2a$
\begin{eqnarray}
S_{2} &\leq &\delta _{o}^{\varepsilon }\left( L\frac{1}{\delta _{o}}\right)
^{\frac{3}{2}}\left( \frac{\frac{4}{9}\left( \frac{1.15}{\varepsilon
^{\varepsilon +2}}\right) +2\left( \frac{0.70}{\varepsilon ^{\varepsilon +3}}%
\right) }{\sqrt{\pi }\left( L2-\frac{1+L8}{9}\right) ^{\frac{3}{2}}\left( L2-%
\frac{L9}{9}\right) ^{\frac{1}{2}}}\right)  \notag \\
&\leq &9.51\frac{\delta _{o}^{\varepsilon }\left( L\frac{1}{\delta _{o}}%
\right) ^{\frac{3}{2}}}{\varepsilon ^{3}}  \label{s2epsilonsmall}
\end{eqnarray}%
And if $\varepsilon >2a$%
\begin{eqnarray}
S_{2} &\leq &\delta _{o}^{\varepsilon }\left( L\frac{1}{\delta _{o}}\right)
^{\frac{3}{2}}\left( \frac{\frac{4I_{1}\left( \varepsilon \right) }{9}%
+2I_{2}\left( \varepsilon \right) }{\sqrt{\pi }\left( L2-\frac{1+L8}{9}%
\right) ^{\frac{3}{2}}\left( L2-\frac{L9}{9}\right) ^{\frac{1}{2}}}\right) 
\notag \\
&\leq &9.90\delta _{o}^{\varepsilon }\left( L\frac{1}{\delta _{o}}\right) ^{%
\frac{3}{2}}\left( \frac{1}{\varepsilon L2}+1\right) .
\label{s2epsilonlarge}
\end{eqnarray}%
Next we move on to our upper bound for $S_{3.}.$%
\begin{eqnarray*}
S_{3} &=&\frac{2^{-\varepsilon \left( n+2\right) }}{\sqrt{\pi L\frac{1}{%
\delta _{n+1}}}}\frac{\left( 1+9\left( 2\right) ^{\varepsilon }\right) }{%
1-2^{-\varepsilon }}+\frac{2^{-\varepsilon \left( n+1\right) }}{\sqrt{\pi
L2^{n+1}}\left( 1-2^{-\varepsilon }\right) } \\
&\leq &\frac{\delta _{o}^{\varepsilon }\left( L\frac{1}{\delta _{o}}\right)
^{\frac{3}{2}}}{\left( n+1\right) ^{\varepsilon }\left( 1-2^{-\varepsilon
}\right) \left( L\frac{1}{\delta _{n}}\right) ^{\frac{3}{2}}}\left( \frac{%
2^{-\varepsilon }+9}{\sqrt{\pi L\frac{1}{\delta _{n+1}}}}+\frac{1}{\sqrt{\pi
L2^{n+1}}}\right) \\
&\leq &\frac{\delta _{o}^{\varepsilon }\left( L\frac{1}{\delta _{o}}\right)
^{\frac{3}{2}}}{9^{\varepsilon }\left( 1-2^{-\varepsilon }\right) \left( L%
\frac{2^{8}}{8}\right) ^{\frac{3}{2}}\sqrt{\pi }}\left( \frac{%
2^{-\varepsilon }+9}{\sqrt{L\frac{2^{9}}{9}}}+\frac{1}{\sqrt{L2^{9}}}\right)
\end{eqnarray*}%
For consistency, we consider the cases $\varepsilon \leq 2a$ and $%
\varepsilon >2a.$ For $\varepsilon \leq 2a,$ 
\begin{eqnarray*}
S_{3} &\leq &\frac{\delta _{o}^{\varepsilon }\left( L\frac{1}{\delta _{o}}%
\right) ^{\frac{3}{2}}}{9^{\varepsilon }\left( \varepsilon L2-\frac{\left(
\varepsilon L2\right) ^{2}}{2}\right) \left( L\frac{2^{8}}{8}\right) ^{\frac{%
3}{2}}\sqrt{\pi }}\left( \frac{2^{-\varepsilon }+9}{\sqrt{L\frac{2^{9}}{9}}}+%
\frac{1}{\sqrt{L2^{9}}}\right) \\
&\leq &\frac{\delta _{o}^{\varepsilon }\left( L\frac{1}{\delta _{o}}\right)
^{\frac{3}{2}}}{\varepsilon ^{3}}\frac{\varepsilon ^{2}}{9^{\varepsilon
}\left( L2-\frac{\varepsilon \left( L2\right) ^{2}}{2}\right) \left( L\frac{%
2^{8}}{8}\right) ^{\frac{3}{2}}\sqrt{\pi }}\left( \frac{10}{\sqrt{L\frac{%
2^{9}}{9}}}+\frac{1}{\sqrt{L2^{9}}}\right)
\end{eqnarray*}%
Note that the function 
\begin{equation*}
\varepsilon \rightarrow \frac{\varepsilon ^{2}}{9^{\varepsilon }\left( L2-%
\frac{\varepsilon \left( L2\right) ^{2}}{2}\right) }
\end{equation*}%
increases on the set $\left( 0,2a\right] .$ Thus%
\begin{equation}
S_{3}\leq 0.06\frac{\delta _{o}^{\varepsilon }\left( L\frac{1}{\delta _{o}}%
\right) ^{\frac{3}{2}}}{\varepsilon ^{3}}  \label{s3epsilonsmall}
\end{equation}%
on that set. And when $\varepsilon >2a,$ we have 
\begin{equation}
S_{3}\leq 0.30\frac{\delta _{o}^{\varepsilon }\left( L\frac{1}{\delta _{o}}%
\right) ^{\frac{3}{2}}}{\varepsilon }  \label{s3epsilonlarge}
\end{equation}

Using the bounds \ref{s1epsilonany}, \ref{s2epsilonlarge}, \ref%
{s2epsilonsmall}, \ref{s3epsilonlarge}, and \ref{s3epsilonsmall}, and
considering the sets, determined by $a,$ in which $\varepsilon $ may lie, we
estimate 
\begin{equation*}
\frac{1}{\delta _{o}^{\varepsilon }\left( L\frac{1}{\delta _{o}}\right) ^{%
\frac{3}{2}}}\mathbb{P}\left( A_{\varepsilon ,\delta _{o}}^{c}\right)
\end{equation*}%
above by 
\begin{equation*}
\frac{9.57}{\varepsilon ^{3}}\mathbf{1}_{(0,2a]}(\varepsilon )+\left( \frac{%
14.59}{\varepsilon }+9.9\right) \mathbf{1}_{(2a,\infty ]}(\varepsilon )+24.05
\end{equation*}

Finally, for $W$ on the set $A_{\varepsilon ,\delta _{o}},$we let $0<\delta
\leq \delta _{0}$ and $s$ and $t$ be such that $0<\left\vert s-t\right\vert
<\delta .$ Let $m$ be the smallest integer so that $\delta \leq \delta _{m}.$
Clearly $m\geq n.$ By inequality \ref{tail and truncated part} and using the
same approach as in Theorem \ref{deviations for fixed delta (modulus)}, we
obtain 
\begin{align}
\left\vert W_{s}-W_{t}\right\vert & \leq \left\vert
W_{s}^{m}-W_{t}^{m}\right\vert +\left\vert \sum_{j=m+1}^{\infty
}2^{-j/2}\sum_{k=0}^{2^{j}-1}\left( \Lambda _{j,k}\left( t\right) -\Lambda
_{j,k}\left( s\right) \right) X_{j,k}\right\vert  \notag \\
& \leq \sqrt{1+\varepsilon }g(\delta )\left( 1+\frac{2.65}{\sqrt{L\frac{1}{%
\delta }}}\right) .  \notag
\end{align}%
The right hand side does not depend on our choice of $s$ and $t,$ so 
\begin{equation*}
\sup_{\left\vert s-t\right\vert \leq \delta }\frac{\left\vert
W_{s}-W_{t}\right\vert }{g\left( \delta \right) r\left( \delta \right) }\leq 
\sqrt{1+\varepsilon },
\end{equation*}%
which holds for every $\delta \leq \delta _{o}$ on the set $A_{\varepsilon
,\delta _{o}}$.
\end{proof}

\subsection{Consequences}

In this subsection we easily extend the results of the previous subsection
to Brownian motion on $\left[ 0,T\right] $ by using the scaling property of
Brownian motion.

\begin{corollary}
\label{MOD on [0,T] fixed delta}For $T\geq 1$ and $\delta \leq T2^{-5},$ we
have 
\begin{equation*}
\mathbb{P}\left( \underset{\left\vert s-t\right\vert \leq \delta }{%
\sup_{0\leq t<s\leq T}}\frac{\left\vert B_{s}-B_{t}\right\vert }{g(\delta
)r\left( \delta ,T\right) }\leq \sqrt{1+\varepsilon }\right) \geq
1-K_{1}\left( \varepsilon \right) \left( \frac{\delta }{T}\right)
^{\varepsilon }\left( L\frac{T}{\delta }\right) ^{\frac{3}{2}}
\end{equation*}%
and 
\begin{equation*}
\mathbb{P}\left( \sup_{\delta \leq \delta _{o}}\underset{\left\vert
s-t\right\vert \leq \delta }{\sup_{0\leq t<s\leq T}}\frac{\left\vert
B_{s}-B_{t}\right\vert }{g(\delta )r\left( \delta ,T\right) }\leq \sqrt{%
1+\varepsilon }\right) \geq 1-K_{2}\left( \varepsilon \right) \left( \frac{%
\delta _{o}}{T}\right) ^{\varepsilon }\left( L\frac{T}{\delta _{o}}\right) ^{%
\frac{3}{2}},
\end{equation*}%
where 
\begin{equation*}
r\left( \delta ,T\right) =r\left( \frac{\delta }{T}\right) \sqrt{\frac{L%
\frac{T}{\delta }}{L\frac{1}{\delta }}}
\end{equation*}%
and $K_{1},$ $K_{2}$ and $r$ are the same as in theorems \ref{deviations for
fixed delta (modulus)} and \ref{uniform mod theorem}.
\end{corollary}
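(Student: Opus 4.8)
The plan is to reduce the statement to the two $[0,1]$ results already proved, by means of the Brownian scaling property; no new probabilistic estimate is needed, only a short algebraic check showing that the normalizer $r(\delta ,T)$ is exactly the factor which makes the rescaled ratios coincide.

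First I would set $W_u:=T^{-1/2}B_{Tu}$ for $u\in[0,1]$. Since $B$ is a Brownian motion on $[0,T]$, the process $W$ is centered Gaussian with $\mathbb{E}[W_uW_v]=T^{-1}(Tu\wedge Tv)=u\wedge v$ and has continuous paths, so it is a standard Brownian motion on $[0,1]$ and Theorems \ref{deviations for fixed delta (modulus)} and \ref{uniform mod theorem} apply to it verbatim. Writing $B_s-B_t=\sqrt{T}\,(W_{s/T}-W_{t/T})$ and substituting $u=t/T$, $v=s/T$, the constraints $0\le t<s\le T$ and $|s-t|\le\delta$ turn into $0\le u<v\le 1$ and $|v-u|\le\delta/T$; the map $(t,s)\mapsto(t/T,s/T)$ is a bijection of the two index sets.

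Next I would record the elementary identity for the modulus function. From $g(x)=\sqrt{2xL\frac{1}{x}}$ one has
\begin{equation*}
\sqrt{T}\,g\!\left(\frac{\delta}{T}\right)=\sqrt{2\delta\,L\frac{T}{\delta}}=g(\delta)\sqrt{\frac{L\frac{T}{\delta}}{L\frac{1}{\delta}}},
\end{equation*}
and therefore, since $r(\delta ,T)=r(\delta/T)\sqrt{L\frac{T}{\delta}\big/L\frac{1}{\delta}}$,
\begin{equation*}
\frac{|B_s-B_t|}{g(\delta)\,r(\delta ,T)}=\frac{\sqrt{T}\,|W_v-W_u|}{\sqrt{T}\,g(\delta/T)\,r(\delta/T)}=\frac{|W_v-W_u|}{g(\delta/T)\,r(\delta/T)}
\end{equation*}
for every pair $(t,s)$ in the index set (equivalently for the corresponding $(u,v)$). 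Taking the supremum---and, in the uniform statement, observing that $\delta\le\delta_o$ is equivalent to $\delta/T\le\delta_o/T$, so that the outer supremum over $\delta$ on the left is carried to the outer supremum over $\delta/T$ on the right---gives
\begin{equation*}
\underset{|s-t|\le\delta}{\sup_{0\le t<s\le T}}\frac{|B_s-B_t|}{g(\delta)r(\delta ,T)}=\underset{|v-u|\le\delta/T}{\sup_{0\le u<v\le 1}}\frac{|W_v-W_u|}{g(\delta/T)r(\delta/T)}
\end{equation*}
and the analogous equality with $\sup_{\delta\le\delta_o}$ adjoined on both sides.

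Finally I would invoke the hypotheses and the $[0,1]$ theorems. The condition $\delta\le T2^{-5}$ forces $\delta/T\le2^{-5}$ (and likewise $\delta_o/T\le2^{-5}$), so the arguments fed to the cited theorems lie in $(0,2^{-5}]$; applying Theorem \ref{deviations for fixed delta (modulus)} with $\delta/T$ in place of $\delta$ yields the first asserted lower bound $1-K_1(\varepsilon)(\delta/T)^{\varepsilon}(L\frac{T}{\delta})^{3/2}$, and applying Theorem \ref{uniform mod theorem} with $\delta_o/T$ in place of $\delta_o$ yields the second, $1-K_2(\varepsilon)(\delta_o/T)^{\varepsilon}(L\frac{T}{\delta_o})^{3/2}$; the constants $K_1$ and $K_2$ are unchanged since they depend on $\varepsilon$ only. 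The only step demanding any attention---and it is a one-line computation---is the displayed identity for $\sqrt{T}\,g(\delta/T)$, which is precisely what motivates the definition of $r(\delta ,T)$; the requirement $T\ge1$ is used only to keep $g(\delta)$ and the logarithms well defined. In short there is no genuine obstacle: the corollary is the $[0,1]$ theorems transported through the scaling map $u\mapsto Tu$.
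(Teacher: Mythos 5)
Your proposal is correct and follows essentially the same route as the paper: rescale via $W_u=T^{-1/2}B_{Tu}$, use the identity $\sqrt{T}\,g(\delta/T)=g(\delta)\sqrt{L\frac{T}{\delta}/L\frac{1}{\delta}}$ (which is exactly what the definition of $r(\delta,T)$ encodes), and then apply Theorems \ref{deviations for fixed delta (modulus)} and \ref{uniform mod theorem} to the rescaled process with $\delta/T$ and $\delta_o/T$. The paper's proof is the same scaling computation, only written more tersely.
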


\begin{proof}
For $T\geq 1$ and $\delta \leq T2^{-5},$ the scaling property of Brownian
motion yields 
\begin{align}
\mathbb{P}\left( \underset{\left\vert s-t\right\vert \leq \delta }{%
\sup_{0\leq t<s\leq T}}\frac{\left\vert B_{s}-B_{t}\right\vert }{g(\delta
)r\left( \delta ,T\right) }\leq \sqrt{1+\varepsilon }\right) & =\mathbb{P}%
\left( \underset{\left\vert s-t\right\vert \leq \delta /T}{\sup_{0\leq
t<s\leq 1}}\frac{\sqrt{T}\left\vert B_{s}-B_{t}\right\vert }{g(\delta
)r\left( \delta ,T\right) }\leq \sqrt{1+\varepsilon }\right)  \notag \\
& =\mathbb{P}\left( \underset{\left\vert s-t\right\vert \leq \delta /T}{%
\sup_{0\leq t<s\leq 1}}\frac{\left\vert B_{s}-B_{t}\right\vert }{g(\frac{%
\delta }{T})r\left( \frac{\delta }{T}\right) }\leq \sqrt{1+\varepsilon }%
\right) .  \label{[0,T] fixed delta}
\end{align}%
Similarly, 
\begin{eqnarray}
&&\mathbb{P}\left( \sup_{\delta \leq \delta _{o}}\underset{\left\vert
s-t\right\vert \leq \delta }{\sup_{0\leq t<s\leq T}}\frac{\left\vert
B_{s}-B_{t}\right\vert }{g(\delta )r\left( \delta ,T\right) }\leq \sqrt{%
1+\varepsilon }\right)  \notag \\
&=&\mathbb{P}\left( \sup_{\delta \leq \delta _{o}}\underset{\left\vert
s-t\right\vert \leq \delta /T}{\sup_{0\leq t<s\leq 1}}\frac{\left\vert
B_{s}-B_{t}\right\vert }{g(\frac{\delta }{T})r\left( \frac{\delta }{T}%
\right) }\leq \sqrt{1+\varepsilon }\right) .  \label{[0,t] uniform}
\end{eqnarray}%
The proof is complete by applying Theorem \ref{deviations for fixed delta
(modulus)} to equation \ref{[0,T] fixed delta} and Theorem \ref{uniform mod
theorem} to equation \ref{[0,t] uniform}.
\end{proof}

\section{Local Maximal Deviations for Truncated Brownian Motion and Brownian
Motion}

In this section, we develop new results regarding the local modulus of
continuity for Brownian motion. Our main contributions are finding estimates
for the distribution function of the maximum of the ratio of a truncated
Brownian motion process and the local modulus of continuity $h$ and the
maximum of the ratio of a Brownian motion process and the local modulus of
continuity $h.$ It may seem more simple to estimate the distribution
function of

\begin{equation*}
\sup_{t<\delta}\frac{W_{t}}{h\left( t\right) }
\end{equation*}
then of

\begin{equation*}
\underset{0\leq t<s\leq 1}{\sup_{\left\vert s-t\right\vert \leq \delta }}%
\frac{\left\vert W_{s}-W_{t}\right\vert }{g\left( \delta \right) }.
\end{equation*}%
However, the subtle difference of the local modulus being evaluated at $t$
and the global modulus being evaluated at $\delta $ makes establishing the
local case more challenging.

The first subsection contains one technical result designed to exploit the
fact that $W_{t}^{n}$ restricted to $I_{n+1,k}$ is a linear function in $t.$
It is an analog of Lemma \ref{mod calc lemma} adapted to treat the local
maximal deviations. In the second subsection, we detail the main results.

\subsection{Preliminaries}

As in the case of the modulus of continuity, we examine $W_{t}^{n}+\left(
W_{t}-W_{t}^{n}\right) .$ The following lemma is necessary in estimating the
maximal deviation for the truncated process. The restriction of $t_{2}<0.17$
appearing in the next lemma assures monotonicity of the function $f.$
Without this condition, the results obtained would still hold true but with
greater constants.

\begin{lemma}
\label{LIL Lemma}Let $a$ and $b$ be constants and $f:\left[ t_{1},t_{2}%
\right] \rightarrow\mathbb{R}$ , $t_{1}>0,$ $t_{2}<0.17$ be defined by 
\begin{equation*}
f(t)=\frac{at+b}{h(t)}
\end{equation*}
Then the relative maxima of$\ f$ must occur at the end points of $\left[
t_{1},t_{2}\right] $.
\end{lemma}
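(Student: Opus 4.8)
The plan is to show that $f(t) = (at+b)/h(t)$ with $h(t) = \sqrt{2t L_2(1/t)}$ has no interior local maximum on $[t_1,t_2]$ with $t_2 < 0.17$, so that any relative maximum is forced to the boundary. The natural route is a direct calculus argument: compute $f'(t)$, identify the sign of $f'$ away from the zeros of $at+b$, and argue that $f'$ can change sign at most in a way that rules out an interior maximum. Writing $h(t)^2 = 2t\ln\ln(1/t)$, one has
\begin{equation*}
f'(t) = \frac{a\,h(t) - (at+b)\,h'(t)}{h(t)^2},
\end{equation*}
so the sign of $f'$ is governed by $\varphi(t) := a\,h(t) - (at+b)h'(t)$ (on the set where $h>0$, i.e. $t<1/e$, which is implied by $t<0.17$). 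The key computation is $h'(t)$: differentiating $h^2 = 2t\ln\ln(1/t)$ gives $2hh' = 2\ln\ln(1/t) - \tfrac{2}{\ln(1/t)}$, hence $h'(t) = \bigl(\ln\ln\tfrac1t - \tfrac{1}{\ln(1/t)}\bigr)/h(t)$. Substituting, $\varphi(t) = \tfrac{1}{h(t)}\bigl[\,a\cdot 2t\ln\ln\tfrac1t - (at+b)\bigl(\ln\ln\tfrac1t - \tfrac{1}{\ln(1/t)}\bigr)\bigr]$, and after clearing the positive factor $1/h(t)$ the numerator is, up to sign, linear in $t$ with coefficients that are simple rational-logarithmic functions of $t$.

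The heart of the argument is then monotonicity: I would show that the bracketed numerator $N(t) := a t\bigl(\ln\ln\tfrac1t + \tfrac{1}{\ln(1/t)}\bigr) - b\bigl(\ln\ln\tfrac1t - \tfrac{1}{\ln(1/t)}\bigr)$ is monotone in $t$ on $(0,0.17]$ — equivalently that $N'(t)$ has constant sign there — for every fixed choice of the constants $a,b$. Since $N$ is an affine function of $(a,b)$, it suffices to check that the two coefficient functions $t\mapsto t\bigl(\ln\ln\tfrac1t + \tfrac1{\ln(1/t)}\bigr)$ and $t\mapsto \ln\ln\tfrac1t - \tfrac1{\ln(1/t)}$ are each monotone on $(0,0.17]$; this is where the threshold $0.17$ enters, chosen precisely so that the relevant derivatives do not vanish on this interval (a larger threshold would force a sign change, which is the content of Remark on the restriction). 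Once $N$ is monotone, it has at most one zero in $(0,0.17]$, so $f'$ changes sign at most once; combined with $f\to 0$ (or $f$ bounded) behavior and the sign of $f$ near a zero of $at+b$, this precludes an interior local maximum, forcing relative maxima to $t_1$ or $t_2$.

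The main obstacle I anticipate is the monotonicity verification for the coefficient functions: the derivatives involve nested logarithms together with terms like $1/(\ln(1/t))^2$, so one must carefully bound these on $(0,0.17]$ and confirm the sign does not flip — this is exactly the kind of elementary but delicate estimate the paper flags as needing the $t_2 < 0.17$ restriction, and it is plausibly handled with the aid of Maple\texttrademark\ as the authors note elsewhere. A secondary subtlety is the degenerate case $a = 0$ (then $f(t) = b/h(t)$, which is monotone on $(0,1/e)$ since $h$ is) and, more generally, handling the possible zero of $at+b$ inside the interval, where $|f|$ would have a minimum rather than a maximum; since the lemma speaks of relative maxima of $f$ itself (not $|f|$), the sign bookkeeping there is routine once $N$'s single-sign-change is established.
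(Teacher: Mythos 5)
Your setup is the right one (the paper's own proof is just the word ``Calculus''), and your formulas for $h'$ and for the sign function $N(t)=a\,u(t)-b\,v(t)$, with $u(t)=t\bigl(\ln\ln\tfrac1t+\tfrac1{\ln(1/t)}\bigr)$ and $v(t)=\ln\ln\tfrac1t-\tfrac1{\ln(1/t)}$, are correct. But the central reduction is invalid: monotonicity of the two coefficient functions separately does not give monotonicity of the affine combination $au-bv$ for arbitrary signs of $a,b$ (compare $t$ and $-t^{2}$, both monotone, with $t-t^{2}$), and even where $N$ is monotone this does not by itself preclude an interior maximum --- if $N$ is \emph{decreasing} and crosses zero, then $f'$ changes sign from $+$ to $-$, which is exactly an interior local maximum. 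What a correct calculus proof needs is the \emph{direction} of monotonicity together with positivity: on $(0,0.17]$ one has $u>0$ and increasing (since $u'=\ln\ln\tfrac1t+(\ln\tfrac1t)^{-2}>0$) and $v>0$ and decreasing; the positivity of $v$, i.e.\ $h'>0$, is precisely what the bound $t_2<0.17<e^{-y_0}$ with $y_0\ln y_0=1$ buys (not a nonvanishing of derivatives of $u,v$, which are monotone on all of $(0,1/e)$). Then a case analysis on the signs of $a,b$ finishes: if $a\ge0\ge b$ then $N\ge0$ and $f$ is nondecreasing; if $a\le0\le b$ then $N\le0$; if $a,b>0$ then $N=au-bv$ is increasing, so $f'$ can only cross from $-$ to $+$, giving a decrease-then-increase profile with maxima at the endpoints.

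The remaining case $a<0,\,b<0$, which you dismiss as routine sign bookkeeping, is where the argument genuinely breaks --- and in fact where the lemma as literally stated fails: there $N=-|a|u+|b|v$ is decreasing and can cross from $+$ to $-$, so $f$ (which is negative) can have an interior relative maximum. Concretely, for $a=-1$, $b=-0.01$ one gets $f(0.001)\approx-0.177$, $f(0.01)\approx-0.114$, $f(0.1)\approx-0.269$, an interior maximum on $[0.001,0.1]\subset(0,0.17)$. So no proof along your lines (or any lines) can close without excluding or separately treating that sign pattern; it is harmless for the paper's use of the lemma, since there the threshold $\sqrt{1+\varepsilon}$ is positive and $a,b<0$ makes the relevant event empty, but your write-up needs the explicit sign case analysis in place of the affine-combination shortcut, and the attribution of the $0.17$ restriction should be corrected as above.
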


\begin{proof}
Calculus.
\end{proof}

\subsection{Local Maximal Deviations}

In this subsection we develop two results. First we estimate the probability
of the set 
\begin{equation*}
\left\{ \sup_{t\leq \delta }\frac{W_{t}^{n}}{h\left( t\right) }\geq \sqrt{%
1+\varepsilon }\right\} ,
\end{equation*}%
which, with a slight modification, gives an upper bound for the probability
of the set 
\begin{equation}
\left\{ \sup_{2^{-n-1}\leq t\leq 2^{-n}}\frac{W_{t}^{n}}{h\left( t\right) }%
\geq \sqrt{1+\varepsilon }\right\} .  \label{supOverIntervalJn}
\end{equation}%
Then, using the estimate of the size of the set \ref{supOverIntervalJn}
together with a modification of the uniform tail estimate established in
Lemma \ref{tail estimate}, we derive the main result, an upper bound for the
uniform maximal deviation from zero of the ratio of the Brownian motion
process and its modulus function $h\left( t\right) .$

\begin{remark}
The restriction $\delta \leq $ $2^{-4}$ imposed in this subsection serves
only one purpose: to make computations easier. Any other bound on $\delta $
which is less than one will only change the constants.
\end{remark}

\begin{theorem}
\label{LIL Truncated maximal deviations}Let $\varepsilon >0$, $n\in \mathbb{N%
}$ and $0<\delta \leq 2^{-4}.$ For $n$ such that $0<\delta <2^{-n-1},$ 
\begin{equation*}
\mathbb{P}\left( \sup_{t<\delta }\frac{W_{t}^{n}}{h\left( t\right) }\geq 
\sqrt{1+\varepsilon }\right) \leq \frac{\left( L\frac{1}{\delta }\right)
^{-1-\varepsilon }}{2\sqrt{\pi L_{2}\frac{1}{\delta }}},
\end{equation*}%
and for $n$ such that $\delta \geq 2^{-n-1},$ 
\begin{equation*}
\mathbb{P}\left( \sup_{t<\delta }\frac{W_{t}^{n}}{h\left( t\right) }\geq 
\sqrt{1+\varepsilon }\right) \leq \left( \left[ 2^{n+1}\delta \right]
+1\right) \frac{\left( L\frac{1}{\delta }\right) ^{-1-\varepsilon }}{\sqrt{%
\pi L_{2}\frac{1}{\delta }}}.
\end{equation*}
\end{theorem}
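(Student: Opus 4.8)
The plan is to mimic the structure of the proof of Theorem~\ref{mod theorem nth partial sum}, but adapted to the local modulus $h$ and to the one-sided supremum over $t$. First I would fix $\varepsilon>0$ and $0<\delta\le 2^{-4}$, and split into the two regimes $\delta<2^{-n-1}$ and $\delta\ge 2^{-n-1}$. In the first regime, $[0,\delta)$ is contained in the first dyadic interval $I_{n+1,0}$, so $W_t^n$ is linear on $[0,\delta)$ with $W_0^n=0$; hence $W_t^n = t\big(X_o+\sum_{j=0}^{n}2^{j/2}(-1)^{\varepsilon_j(0)}X_{j,k_j(0)}\big) = tZ$ where $Z$ is standard normal (a sum of $n+2$ independent scaled Gaussians whose variances sum to $1+\sum_{j=0}^n 2^j\cdot 2^{-j}\cdot$\dots actually one must check the Parseval bookkeeping gives variance $1$). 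Then $\sup_{t<\delta} W_t^n/h(t) = Z\cdot\sup_{t<\delta} t/h(t)$, and since $t/h(t)=\sqrt{t/(2L_2\frac1t)}$ is increasing on $(0,0.17)$ (this is where the restriction $\delta\le 2^{-4}$ enters, cf.\ Lemma~\ref{LIL Lemma}), the supremum is attained at $t=\delta$, giving $\sup_{t<\delta}W_t^n/h(t)=Z\,\delta/h(\delta)$. Therefore the event is $\{Z > \sqrt{1+\varepsilon}\,h(\delta)/\delta\} = \{Z>\sqrt{2(1+\varepsilon)L_2\frac1\delta}\}$, and the standard Gaussian tail bound $\mathbb{P}(N(0,1)>x)\le \frac{1}{x\sqrt{2\pi}}e^{-x^2/2}$ yields
\begin{equation*}
\mathbb{P}\le \frac{1}{\sqrt{2(1+\varepsilon)L_2\frac1\delta}\sqrt{2\pi}}\Big(L\tfrac1\delta\Big)^{-(1+\varepsilon)} \le \frac{(L\frac1\delta)^{-1-\varepsilon}}{2\sqrt{\pi L_2\frac1\delta}},
\end{equation*}
which is the claimed bound (using $1+\varepsilon\ge 1$ to drop that factor and $2\sqrt{(1+\varepsilon)\pi}\ge 2\sqrt\pi$).

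For the second regime $\delta\ge 2^{-n-1}$, the interval $[0,\delta)$ meets several dyadic intervals $I_{n+1,k}$ for $k=0,\dots,[2^{n+1}\delta]$, and on each of these $W^n$ is affine in $t$: $W_t^n = a_k t + b_k$. I would then write
\begin{equation*}
\Big\{\sup_{t<\delta}\tfrac{W_t^n}{h(t)}\ge\sqrt{1+\varepsilon}\Big\} \subseteq \bigcup_{k=0}^{[2^{n+1}\delta]}\Big\{\sup_{t\in[k2^{-n-1},(k+1)2^{-n-1})\cap[0,\delta)}\tfrac{W_t^n}{h(t)}\ge\sqrt{1+\varepsilon}\Big\},
\end{equation*}
and apply the union bound, so there are $[2^{n+1}\delta]+1$ terms. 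By Lemma~\ref{LIL Lemma} (with $t_2<0.17$, guaranteed since $\delta\le 2^{-4}$), on each piece the relative maxima of $t\mapsto(a_k t+b_k)/h(t)$ occur at the endpoints; the larger endpoint is $(k+1)2^{-n-1}$ (or $\delta$ on the last piece), so each piecewise supremum equals $W^n_{t_k^*}/h(t_k^*)$ for an endpoint $t_k^*\in[2^{-n-1},\delta]$. Since $W^n_{t_k^*}=W_{t_k^*}$ (agreement at dyadics of level $n+1$) and $W_{t_k^*}/\sqrt{t_k^*}$ is standard normal, each term is bounded by $\mathbb{P}(N(0,1)\ge \sqrt{1+\varepsilon}\,h(t_k^*)/\sqrt{t_k^*}) = \mathbb{P}(N(0,1)\ge\sqrt{2(1+\varepsilon)L_2\frac{1}{t_k^*}})$; because $t_k^*\le\delta$ we have $L_2\frac{1}{t_k^*}\ge L_2\frac1\delta$, so this is at most $\frac{(L\frac1\delta)^{-1-\varepsilon}}{\sqrt{\pi L_2\frac1\delta}}$ by the same Gaussian tail estimate (here keeping the constant $1$ rather than $\tfrac12$, since one doesn't have the extra factor to spare after using $\sqrt{2(1+\varepsilon)}\ge\sqrt2$). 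Multiplying by the number of terms gives the stated inequality.

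The main obstacle I anticipate is not the Gaussian tail step but rather being careful about two bookkeeping points: (i) verifying that the coefficient $a_k$ appearing on the intermediate pieces doesn't spoil the endpoint evaluation — i.e.\ that even though the maximizing endpoint is an interior dyadic point $(k+1)2^{-n-1}$ rather than $\delta$ itself, we still have $W^n$ agreeing with $W$ there and hence a clean $N(0,1)/\sqrt{t}$ normalization; and (ii) checking the monotonicity hypotheses of Lemma~\ref{LIL Lemma} genuinely apply on every sub-piece, including the first piece $I_{n+1,0}$ where $b_0=0$ and one must rule out the supremum escaping to $t\to 0^+$ (it doesn't, since $t/h(t)\to 0$). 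A minor additional care is needed in the first regime to confirm the variance of $Z=X_o+\sum_{j=0}^n 2^{j/2}(-1)^{\varepsilon_j(0)}X_{j,k_j}$ is exactly $1+\sum_{j=0}^n 2^j\cdot(\text{appropriate factor})$; in fact on $I_{n+1,0}$ the relevant $\Lambda_{j,k}$ derivatives contribute so that $\mathrm{Var}(W_t^n)=t$, matching $\mathrm{Var}(W_t)=t$, which is the only fact actually used. Once these are nailed down, both displays follow from a single Gaussian tail inequality applied at $t=\delta$ (regime one) or at the dyadic endpoints $\le\delta$ (regime two), with the count $[2^{n+1}\delta]+1$ coming directly from the number of dyadic subintervals of level $n+1$ meeting $[0,\delta)$.
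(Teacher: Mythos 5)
Your route is the paper's route: decompose $[0,\delta)$ into the level-$(n+1)$ dyadic pieces, union bound over the $[2^{n+1}\delta]+1$ pieces, use linearity of $W^n$ and Lemma \ref{LIL Lemma} to push the supremum to endpoints, use $W^n=W$ at dyadics, and finish with a Gaussian tail estimate together with monotonicity in $t$. However, two of your bookkeeping claims are wrong as stated. First, in the regime $\delta<2^{-n-1}$ the random variable $Z=X_o+\sum_{j=0}^{n}2^{j/2}X_{j,0}$ is \emph{not} standard normal: on $I_{n+1,0}$ one has $\Lambda_{j,0}(t)=2^{j}t$, so $\operatorname{Var}(Z)=1+\sum_{j=0}^{n}2^{j}=2^{n+1}$ and $\operatorname{Var}(W_t^n)=2^{n+1}t^2$, which equals $t$ only at $t=2^{-n-1}$ and is strictly smaller inside. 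Your closing assertion that ``$\operatorname{Var}(W_t^n)=t$, which is the only fact actually used'' is therefore false, and your displayed identity $\{Z>\sqrt{1+\varepsilon}\,h(\delta)/\delta\}=\{Z>\sqrt{2(1+\varepsilon)L_2\frac1\delta}\}$ does not hold under your normalization. What actually saves the estimate is the inequality $\operatorname{Var}(W_\delta^n)=2^{n+1}\delta^2\le\delta$, which is exactly the regime hypothesis $\delta\le 2^{-n-1}$; with that correction the threshold is \emph{at least} $\sqrt{2(1+\varepsilon)L_2\frac1\delta}$ and the stated bound follows.

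Second, in the regime $\delta\ge 2^{-n-1}$ the maximizing endpoint of $t\mapsto(a_kt+b_k)/h(t)$ on a piece is random (it can be either endpoint, depending on the signs of $a_k,b_k$), so you cannot write the per-piece probability as a single $N(0,1)$ tail evaluated at ``the'' endpoint $t_k^*$. One must union-bound over both endpoints, paying a factor $2$; this factor is absorbed by the $\tfrac12$ prefactor of the Gaussian tail, which is precisely how the paper arrives at the per-interval bound $\left(L\frac1\delta\right)^{-1-\varepsilon}/\sqrt{\pi L_2\frac1\delta}$ (see inequalities \ref{k>0 (LIL)} and \ref{k=[2^(n+1)d]  (LIL)}). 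Your final per-interval constant happens to coincide, but the reason you give for keeping the constant $1$ (the $\sqrt{1+\varepsilon}$ factor) is not the operative one. Both issues are local and fixable — with the variance inequality and the two-endpoint union bound in place, your argument becomes the paper's proof with the same constants.
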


\begin{proof}
As in Theorem \ref{mod theorem nth partial sum}, we will use the fact that $%
W_{t}^{n}$\ is a linear function of $t$ when restricted to the intervals $%
I_{k}=I_{n+1,k},$ for $k=0,...,[2^{n+1}\delta].$ Thus 
\begin{equation}
\mathbb{P}\left( \sup_{t\leq\delta}\frac{W_{t}^{n}}{h\left( t\right) }\geq%
\sqrt{1+\varepsilon}\right) \leq\sum_{k=0}^{\left[ 2^{n+1}\delta\right] }%
\mathbb{P}\left( \sup_{t\in I_{n+1,k}}\frac{W_{t}^{n}}{h\left( t\right) }\geq%
\sqrt{1+\varepsilon}\right) .  \label{LIL nth partial sum at beginning}
\end{equation}

Set $\delta _{n}=\min \{\delta ,2^{-n-1}\}.$ We treat each $k$ differently:$%
\ k=0,$ $0<k<\left[ 2^{n+1}\delta \right] ,$ and $k=\left[ 2^{n+1}\delta %
\right] .$ Notice, when $k=0$, $W_{t}^{n}=tX_{o}+%
\sum_{j=0}^{n}2^{j/2}tX_{j,0}$ and thus 
\begin{equation}
\mathbb{P}\left( \sup_{t\in I_{0}}\frac{\sqrt{t}\left(
X_{o}+\sum_{j=0}^{n}2^{j/2}X_{j,0}\right) }{\sqrt{2\pi L_{2}\frac{1}{t}}}%
\geq \sqrt{1+\varepsilon }\right) \leq \frac{\left( L\frac{1}{\delta _{n}}%
\right) ^{-1-\varepsilon }}{2\sqrt{\pi L_{2}\frac{1}{\delta _{n}}}}\text{.}
\label{k=0 (LIL)}
\end{equation}%
And for $0<k<\left[ 2^{n+1}\delta \right] ,\ W_{t}^{n}$ can be written as $%
at+b$. By Lemma \ref{LIL Lemma}, we have the\ inequality%
\begin{align}
& \mathbb{P}\left( \sup_{t\in I_{k}}\frac{W_{t}^{n}}{h\left( t\right) }\geq 
\sqrt{1+\varepsilon }\right)  \notag \\
& \leq \mathbb{P}\left( \frac{W_{k2^{-n-1}}^{n}}{h\left( k2^{-n-1}\right) }%
\geq \sqrt{1+\varepsilon }\right) +\mathbb{P}\left( \frac{%
W_{(k+1)2^{-n-1}}^{n}}{h\left( (k+1)2^{-n-1}\right) }\geq \sqrt{%
1+\varepsilon }\right)  \notag \\
& \leq \frac{\left( L\frac{2^{n+1}}{k+1}\right) ^{-\left( 1+\varepsilon
\right) }}{\sqrt{\pi L_{2}\frac{2^{n+1}}{k+1}}}.  \label{k>0 (LIL)}
\end{align}%
Lastly, for $k=\left[ 2^{n+1}\delta \right] ,$ we apply Lemma \ref{LIL Lemma}
again to obtain 
\begin{equation}
\mathbb{P}\left( \sup_{t\in I_{\left[ 2^{n+1}\delta \right] }}\frac{W_{t}^{n}%
}{h\left( t\right) }\geq \sqrt{1+\varepsilon }\right) \leq \frac{\left( L%
\frac{1}{\delta }\right) ^{-1-\varepsilon }}{\sqrt{\pi L_{2}\frac{1}{\delta }%
}}.  \label{k=[2^(n+1)d]  (LIL)}
\end{equation}%
Next we look at small and large $\delta $ separately; that is, $0<\delta
<2^{-n-1}$ and $\delta \geq 2^{-n-1}.$ For $0<\delta <2^{-n-1},$we
incorporate inequalities \ref{k=0 (LIL)}, \ref{k>0 (LIL)} and \ref%
{k=[2^(n+1)d] (LIL)} into inequality \ref{LIL nth partial sum at beginning}
and see 
\begin{equation*}
\mathbb{P}\left( \sup_{t\leq \delta }\frac{W_{t}^{n}}{h\left( t\right) }\geq 
\sqrt{1+\varepsilon }\right) \leq \frac{\left( L\frac{1}{\delta }\right)
^{-1-\varepsilon }}{2\sqrt{\pi L_{2}\frac{1}{\delta }}}.
\end{equation*}%
Similarly for $\delta \geq 2^{-n-1},$ we obtain 
\begin{align*}
& \mathbb{P}\left( \sup_{t\leq \delta }\frac{W_{t}^{n}}{h\left( t\right) }%
\geq \sqrt{1+\varepsilon }\right) \leq \sum_{k=0}^{\left[ 2^{n+1}\delta %
\right] -1}\frac{\left( L\frac{2^{n+1}}{k+1}\right) ^{-1-\varepsilon }}{%
\sqrt{\pi L_{2}\frac{2^{n+1}}{k+1}}}+\frac{\left( L\frac{1}{\delta }\right)
^{-1-\varepsilon }}{\sqrt{\pi L_{2}\frac{1}{\delta }}} \\
& \leq \frac{\left( \left[ 2^{n+1}\delta \right] +1\right) \left( L\frac{1}{%
\delta }\right) ^{-1-\varepsilon }}{\sqrt{\pi L_{2}\frac{1}{\delta }}}.
\end{align*}
\end{proof}

We point out the nuances between the global and local cases briefly
mentioned in the introduction of this section. First in Theorem \ref{LIL
Truncated maximal deviations} the bounds obtained on the probability for the
truncated process are not summable over $n.$ Thus we can not directly employ
our methods used in Theorem \ref{uniform mod theorem}. Also recall that
according to Lemma \ref{tail estimate}, the tail behaves as $\sqrt{m2^{-m}}.$
At the end of the proof of Theorem \ref{uniform mod theorem} the tail term
in inequality \ref{tail and truncated part} is divided by $g\left(
m2^{-m}\right) $ resulting in a term that tends to zero as $m\rightarrow
\infty .$ This doesn't happen in the local case since we divide by $h\left(
t\right) ,$ not $h\left( \delta \right) $. To resolve these issues we
estimate the set%
\begin{equation*}
\left\{ \sup_{t\leq \delta }\frac{W_{t}^{n}}{h\left( t\right) }\geq \sqrt{%
1+\varepsilon }\right\}
\end{equation*}%
by breaking up the interval $\left( 0,\delta \right) $ into subintervals $%
J_{n}=\left[ 2^{-n-1},2^{-n}\right) $ where each subinterval $J_{n}$
produces an estimate which is a general term of a summable series. Also note
that breaking up the process over the intervals $J_{n}$ and choosing $d$
from Lemma \ref{tail estimate}$\ $to be $1+\frac{2}{\varepsilon }$ yields
terms of order of $n^{-1-\frac{\varepsilon }{2}}\left( Ln\right) ^{-\frac{1}{%
2}}$ for both the truncated process and the tail.

Denote 
\begin{equation*}
m\left( \varepsilon \right) =\left[ \frac{\varepsilon }{2L2}%
L_{2}2^{m+1}+f\left( \varepsilon \right) \right] +m+1
\end{equation*}%
where $f\left( \varepsilon \right) =\left( 1-\left( L2\right) ^{-1}\right) 
\mathbf{1}_{(0,1]}(\varepsilon ).$

\begin{corollary}
\label{LIL (nth sum corollary)}For $\varepsilon >0,$ $m\in \mathbb{N},$ 
\begin{equation*}
\mathbb{P}\left( \sup_{t\in J_{m}}\frac{W_{t}^{m\left( \varepsilon \right) }%
}{h\left( t\right) }\geq \sqrt{1+\varepsilon }\right) \leq \frac{2^{m\left(
\varepsilon \right) -m-1}\left( L2^{m+2}\right) ^{-\left( 1+\varepsilon
\right) }}{\sqrt{\pi \left( 1+\varepsilon \right) L_{2}2^{m+1}}}\text{.}
\end{equation*}
\end{corollary}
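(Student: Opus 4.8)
\textbf{Proof plan for Corollary \ref{LIL (nth sum corollary)}.}
The plan is to specialize the second bound of Theorem \ref{LIL Truncated maximal deviations} to the window $J_m = [2^{-m-1}, 2^{-m})$ and to the truncation level $n = m(\varepsilon)$, which by construction is chosen large enough that $2^{-m} \le \delta < 2^{-n-1}$ fails in the ``small $\delta$'' regime but instead places us in the ``large $\delta$'' regime relative to $J_m$ — actually the cleaner route is to apply the theorem with $\delta = 2^{-m}$ on the interval $(0,2^{-m})$ and then subtract, so let me restructure. First I would observe that
\begin{equation*}
\left\{ \sup_{t \in J_m} \frac{W_t^{m(\varepsilon)}}{h(t)} \ge \sqrt{1+\varepsilon} \right\} \subseteq \left\{ \sup_{t < 2^{-m}} \frac{W_t^{m(\varepsilon)}}{h(t)} \ge \sqrt{1+\varepsilon} \right\},
\end{equation*}
and since $m(\varepsilon) \ge m+1$ we have $2^{-m} \ge 2^{-m(\varepsilon)-1}$, so the second (large-$\delta$) bound of Theorem \ref{LIL Truncated maximal deviations} applies with $n = m(\varepsilon)$ and $\delta = 2^{-m}$, giving a factor $\bigl([2^{m(\varepsilon)+1-m}]+1\bigr)$ times $\bigl(L 2^{m}\bigr)^{-1-\varepsilon}/\sqrt{\pi L_2 2^{m}}$. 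The trouble is that this is coarser than the claimed bound, which carries $L 2^{m+2}$ and $L_2 2^{m+1}$ and a $\sqrt{1+\varepsilon}$ in the denominator; so the actual argument must go back into the proof of Theorem \ref{LIL Truncated maximal deviations} and redo the estimates \emph{restricted to $J_m$} rather than to all of $(0,\delta)$.

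So the real plan is as follows. On $J_m$, the truncated process $W_t^{m(\varepsilon)}$ is piecewise linear over the dyadic intervals $I_{m(\varepsilon)+1,k}$, and the number of such subintervals meeting $J_m$ is exactly $2^{m(\varepsilon)-m-1}$ (the length of $J_m$ is $2^{-m-1}$, each subinterval has length $2^{-m(\varepsilon)-1}$). On each such subinterval I would invoke Lemma \ref{LIL Lemma} (the hypothesis $t_2 < 0.17$ is met since $2^{-m} \le 2^{-m} \le 1/16 < 0.17$ once $m \ge 4$, and for small $m$ one checks $\delta \le 2^{-4}$ still forces the relevant endpoints to be small — this is where the standing restriction $\delta \le 2^{-4}$ is used) to reduce the supremum over the subinterval to the two endpoint values, each of which is a centered Gaussian with variance equal to that endpoint, i.e. $W_t^{m(\varepsilon)}/\sqrt{t} \sim N(0,1)$ at dyadic $t$. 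Then for each endpoint $t \in J_m$ one has $h(t)/\sqrt{t} = \sqrt{2 L_2(1/t)} \ge \sqrt{2 L_2 2^{m}} \ge \sqrt{2 L_2 2^{m+1}}$ — wait, that inequality goes the wrong way; the smallest value of $t$ on $J_m$ is $2^{-m-1}$, giving $L_2(1/t) \le L_2 2^{m+1}$, so to get a clean \emph{lower} bound on $h(t)$ uniformly over the closure of $J_m$ one uses $t \ge 2^{-m-1}$ hence $h(t) \ge \sqrt{2 \cdot 2^{-m-1} L_2 2^{m+1}}$; meanwhile every endpoint lies in $[2^{-m-1}, 2^{-m}]$ so the level $\sqrt{1+\varepsilon}$ translates to requiring $|N(0,1)| \ge \sqrt{1+\varepsilon}\,h(t)/\sqrt{t} \ge \sqrt{2(1+\varepsilon) L_2 2^{m+1}}$ — hmm, again I need the endpoint bound in the direction that makes the Gaussian tail small, which requires $L_2(1/t)$ bounded \emph{below}, i.e. $t$ bounded \emph{above} by $2^{-m}$, giving $L_2(1/t) \ge L_2 2^{m}$. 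I would sort out this direction carefully: the correct statement is $h(t)/\sqrt t \ge \sqrt{2 L_2 2^{m}} \ge \sqrt{2} \sqrt{L_2 2^{m+1}} \cdot c$ for an explicit $c$, or more simply argue the threshold at each endpoint is at least $\sqrt{2(1+\varepsilon) L_2 2^{m+1}}$ after absorbing constants, so that the one-sided Gaussian tail is at most $\tfrac12 \exp(-(1+\varepsilon) L_2 2^{m+1})/\sqrt{2\pi (1+\varepsilon) L_2 2^{m+1}} = \tfrac12 (L 2^{m+1})^{-(1+\varepsilon)}/\sqrt{2\pi(1+\varepsilon) L_2 2^{m+1}}$, using $e^{-L_2 x} = 1/Lx$ and the standard tail bound $\mathbb P(N(0,1) \ge u) \le \tfrac{1}{2} e^{-u^2/2}/(u\sqrt{\pi/2})$ or the cruder $\le e^{-u^2/2}/(u\sqrt{2\pi})$.

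Summing over the $2^{m(\varepsilon)-m-1}$ subintervals — each contributing its two endpoints, but adjacent subintervals share endpoints so the total count of endpoint-evaluations is $2^{m(\varepsilon)-m-1}+1$, and one absorbs the $+1$ into the prefactor, or uses the one-endpoint-per-subinterval bookkeeping as in \eqref{k>0 (LIL)} — yields the factor $2^{m(\varepsilon)-m-1}$ in front of $(L2^{m+2})^{-(1+\varepsilon)}/\sqrt{\pi(1+\varepsilon) L_2 2^{m+1}}$. The appearance of $L 2^{m+2}$ rather than $L 2^{m+1}$ is exactly the slack one gains by noting that on $J_m$ the relevant dyadic endpoints, except possibly the single leftmost one $2^{-m-1}$, all satisfy $t \ge 2^{-m-1} + 2^{-m(\varepsilon)-1}$, or more robustly by bounding $L(1/t) \ge L 2^{m}$ and then using $L 2^{m} \ge \tfrac12 L 2^{m+2}$ type comparisons (valid for $m \ge 2$) to trade a clean constant for the extra factor of $4$ inside the logarithm — I would make this comparison explicit and check it against the standing hypothesis $m \in \mathbb N$, $\delta \le 2^{-4}$. \textbf{The main obstacle} I anticipate is precisely this bookkeeping of which logarithm base-argument ($2^{m}$ vs $2^{m+1}$ vs $2^{m+2}$) and which $L_2$-argument appears: one must be scrupulous about whether the supremum over $J_m$ is controlled by the left endpoint $2^{-m-1}$ (worst for $h(t)$ in the denominator, since $h$ is increasing on this range) or the right endpoint $2^{-m}$ (worst for the variance of the Gaussian), and the stated bound is engineered so that a single clean constant works for all $m$ simultaneously; verifying the claimed inequality is sharp enough to be true but loose enough to hold for every $m$ is the delicate part, and it is exactly here that the choice of $m(\varepsilon) = [\tfrac{\varepsilon}{2L2} L_2 2^{m+1} + f(\varepsilon)] + m + 1$ is calibrated — substituting this $m(\varepsilon)$ into $2^{m(\varepsilon)-m-1} (L 2^{m+2})^{-(1+\varepsilon)}$ should produce a bounded (indeed small) quantity, and confirming that is the computational heart of the corollary.
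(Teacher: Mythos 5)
Your overall route is the same as the paper's: the paper proves the corollary by re-running the argument of Theorem \ref{LIL Truncated maximal deviations} restricted to $J_{m}$ at truncation level $m(\varepsilon )$ --- piecewise linearity over the level-$(m(\varepsilon )+1)$ dyadic subintervals, Lemma \ref{LIL Lemma} to pass to dyadic endpoint evaluations, and the one-sided Gaussian tail bound in which the factor $(1+\varepsilon )$ is kept inside the square root; it then bounds the resulting sum, namely $\sum_{k}\bigl(L\tfrac{2^{m(\varepsilon )+1}}{k+1}\bigr)^{-(1+\varepsilon )}\big/\sqrt{\pi (1+\varepsilon )L_{2}\tfrac{2^{m(\varepsilon )+1}}{k+1}}$ over the $k$ indexing $J_{m}$, by the number of terms times an extreme term, which is exactly the two-line computation giving the prefactor $2^{m(\varepsilon )-m-1}$. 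You correctly discard the shortcut of quoting Theorem \ref{LIL Truncated maximal deviations} with $\delta =2^{-m}$ (it loses the $(1+\varepsilon )$ under the root and the restriction to $J_{m}$), and you identify the right replacement.

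However, as written your proposal does not reach the stated inequality, and the gap sits precisely in the part you flag but leave unresolved. First, your subinterval count is internally inconsistent: $2^{-m-1}/2^{-m(\varepsilon )-1}=2^{m(\varepsilon )-m}$, not $2^{m(\varepsilon )-m-1}$; the missing factor $\tfrac{1}{2}$ has to be produced by the one-sided tail (the $\tfrac12$ visible in \eqref{k=0 (LIL)}) or by the endpoint bookkeeping as in \eqref{k>0 (LIL)}, and your sketch never secures it. Second, the fallback you propose for fixing the logarithm's argument --- comparisons of the type $L2^{m}\geq \tfrac12 L2^{m+2}$ --- would introduce an extra factor $2^{1+\varepsilon }$ that the claimed bound has no room to absorb, so that step fails; what is needed instead is the monotonicity in $k$ of the per-endpoint bound, replacing every summand by its value at the extreme index, which is how the paper arrives at the $L2^{m+2}$ and $L_{2}2^{m+1}$ arguments together with the prefactor $2^{m(\varepsilon )-m-1}$ in one stroke. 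Finally, your closing claim that the ``computational heart'' is checking that $2^{m(\varepsilon )-m-1}(L2^{m+2})^{-(1+\varepsilon )}$ is small misplaces the content: that verification (summability over $m$ after substituting the definition of $m(\varepsilon )$) belongs to the proof of Theorem \ref{LIL Maximal Deviation Theorem}; the corollary itself simply records the bound with the prefactor left explicit, and its proof is only the endpoint summation described above.
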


\begin{proof}
\begin{align*}
\mathbb{P}\left( \sup_{t\in J_{m}}\frac{W_{t}^{m\left( \varepsilon \right) }%
}{h\left( t\right) }\geq \sqrt{1+\varepsilon }\right) & \leq
\sum_{k=2^{m\left( \varepsilon \right) -m-1}}^{2^{m\left( \varepsilon
\right) -m}-1}\frac{\left( L\frac{2^{m\left( \varepsilon \right) +1}}{k+1}%
\right) ^{-\left( 1+\varepsilon \right) }}{\sqrt{\pi \left( 1+\varepsilon
\right) L_{2}\frac{2^{m\left( \varepsilon \right) +1}}{k+1}}} \\
& \leq \frac{2^{m\left( \varepsilon \right) -m-1}\left( L2^{m+2}\right)
^{-\left( 1+\varepsilon \right) }}{\sqrt{\pi \left( 1+\varepsilon \right)
L_{2}2^{m+1}}}
\end{align*}
\end{proof}

We are ready to determine the main results of the section, the local maximal
deviation of the ratio of the Brownian motion and the local modulus of
continuity.

\begin{theorem}
\label{LIL Maximal Deviation Theorem}For $0<\delta <2^{-4}$ and $\varepsilon
>0,$%
\begin{equation*}
\mathbb{P}\left( \sup_{t\leq \delta }\frac{W_{t}}{h\left( t\right) s\left(
t,\varepsilon \right) }\leq \sqrt{1+\varepsilon }\right) >1-J\left(
\varepsilon ,\delta \right)
\end{equation*}%
where 
\begin{equation*}
s\left( t,\varepsilon \right) =1+3.61\left( \frac{\mathbf{1}%
_{(0,1]}(\varepsilon )}{\sqrt{\varepsilon }\max \left\{ \sqrt{L_{2}\frac{1}{t%
}},\sqrt{\left( L\frac{1}{t}\right) ^{\frac{\varepsilon }{2}}}\right\} }+%
\frac{\mathbf{1}_{(1,\infty )}(\varepsilon )}{\left( L\frac{1}{t}\right) ^{%
\frac{\varepsilon }{4}}}\right)
\end{equation*}%
and 
\begin{equation*}
J\left( \varepsilon ,\delta \right) =\min \left( \frac{\frac{1.302}{%
\varepsilon }\mathbf{1}_{(0,1]}(\varepsilon )+1.18\mathbf{1}_{(1,\infty
)}(\varepsilon )}{\left( L\frac{1}{\delta }\right) ^{\frac{\varepsilon }{2}}%
\sqrt{L_{2}\frac{1}{\delta }}},1\right) .
\end{equation*}
\end{theorem}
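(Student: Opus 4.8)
The plan is to mimic the structure of the proof of Theorem~\ref{uniform mod theorem}, but with the interval $(0,\delta]$ broken along the dyadic blocks $J_m=[2^{-m-1},2^{-m})$, as flagged in the discussion preceding Corollary~\ref{LIL (nth sum corollary)}. Fix $\varepsilon>0$ and $0<\delta<2^{-4}$, and let $N$ be the integer with $2^{-N-1}\le\delta<2^{-N}$, so that $N\ge 4$. For each $m\ge N$ I would truncate the process at level $m(\varepsilon)$ (the level appearing in Corollary~\ref{LIL (nth sum corollary)}, chosen precisely so that the truncated-process bound and the tail bound have the same order $2^{m(\varepsilon)-m-1}(L2^{m+2})^{-(1+\varepsilon)}(L_2 2^{m+1})^{-1/2}$), and define the good set
\begin{equation*}
A_{\varepsilon,\delta}=\bigcap_{m\ge N}\left\{\sup_{t\in J_m}\frac{W_t^{m(\varepsilon)}}{h(t)}\le\sqrt{1+\varepsilon}\right\}\cap\bigcap_{m\ge N}\left\{\max_{0\le k\le 2^j-1}\max_{j>m(\varepsilon)}\frac{|X_{j,k}|}{\sqrt{L2^j}}\le\sqrt{2\bigl(1+\tfrac{2}{\varepsilon}\bigr)}\right\},
\end{equation*}
where the choice $d=1+\tfrac{2}{\varepsilon}$ in Lemma~\ref{tail estimate} is forced by the requirement that the tail contribution also be of order $n^{-1-\varepsilon/2}(Ln)^{-1/2}$.

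The first step is to bound $\mathbb{P}(A_{\varepsilon,\delta}^c)$. For the truncated part I sum the estimate of Corollary~\ref{LIL (nth sum corollary)} over $m\ge N$; for the tail part I sum the estimate of Lemma~\ref{tail estimate} (with $d=1+2/\varepsilon$ and $n=m(\varepsilon)+1$) over $m\ge N$. Both series are dominated by geometric-type series in $m$ after pulling out the factor $2^{m(\varepsilon)-m-1}$ and using $m(\varepsilon)-m-1\approx\frac{\varepsilon}{2L2}L_2 2^{m+1}+f(\varepsilon)$, which converts $2^{m(\varepsilon)-m-1}$ into a power of $L2^{m+1}$; the net effect is that the $m$th term behaves like $(L2^{m+1})^{-1-\varepsilon/2}(L_2 2^{m+1})^{-1/2}$ up to constants, and summing over $m\ge N$ and comparing with an integral gives the bound $J(\varepsilon,\delta)$ after splitting into $0<\varepsilon\le 1$ and $\varepsilon>1$ (the source of the two pieces of $J$). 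The truncation of the series at $m=N$ and the replacement of the tail of the sum by an integral are exactly where the constants $1.302$ and $1.18$ get pinned down.

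The second step is the pathwise estimate on $A_{\varepsilon,\delta}$: for $t\in J_m$ with $m\ge N$, I would write $W_t=W_t^{m(\varepsilon)}+\sum_{j>m(\varepsilon)}2^{-j/2}\sum_k\Lambda_{j,k}(t)X_{j,k}$, bound the first term by $\sqrt{1+\varepsilon}\,h(t)$ on $A_{\varepsilon,\delta}$, and bound the tail by $\sqrt{2(1+2/\varepsilon)}\sum_{j>m(\varepsilon)}2^{-j/2}\sqrt{j}\cdot\tfrac12$ using $|\Lambda_{j,k}|\le 2^{-1}$ and at most one nonzero $k$ per $j$. The key point is that $\sum_{j>m(\varepsilon)}2^{-j/2}\sqrt{j}\asymp\sqrt{m(\varepsilon)2^{-m(\varepsilon)}}$, and $m(\varepsilon)2^{-m(\varepsilon)}$ is (by the definition of $m(\varepsilon)$) comparable to $2^{-m-1}\cdot(L_2 2^{m+1})^{-1}$ up to an $\varepsilon$-dependent factor; dividing by $h(t)\asymp\sqrt{2^{-m-1}L_2 2^{m+1}}$ then produces exactly a term of the shape $3.61\bigl(\varepsilon^{-1/2}\max\{\sqrt{L_2(1/t)},(L(1/t))^{\varepsilon/4}\}^{-1}\bigr)$ for $\varepsilon\le1$ and $3.61(L(1/t))^{-\varepsilon/4}$ for $\varepsilon>1$ — i.e.\ $s(t,\varepsilon)-1$. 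Since $t\in J_m$ ranges over $m\ge N$ and the right-hand side does not depend on the particular $t$ within $J_m$, this gives $W_t\le\sqrt{1+\varepsilon}\,h(t)s(t,\varepsilon)$ for all $t\le\delta$ on $A_{\varepsilon,\delta}$, and the theorem follows (the $\min(\cdot,1)$ in $J$ is the trivial probability bound).

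The main obstacle is the bookkeeping in the second step: reconciling the combinatorial definition of $m(\varepsilon)$ (an integer part of an expression involving $L_2 2^{m+1}$ and the correction $f(\varepsilon)$) with the clean closed form of $s(t,\varepsilon)$, across the regimes $\varepsilon\le1$ and $\varepsilon>1$ and across the two competing bounds $\sqrt{L_2(1/t)}$ versus $(L(1/t))^{\varepsilon/2}$ inside the $\max$. One must verify that $m(\varepsilon)$ is chosen so that $2^{m(\varepsilon)-m-1}$ is simultaneously large enough to make the truncated-process probability summable with the stated constant and small enough that $\sqrt{m(\varepsilon)2^{-m(\varepsilon)}}/h(t)$ does not exceed the stated $s(t,\varepsilon)-1$; this is the delicate balance, and it is why the floor function and the piecewise $f(\varepsilon)$ appear. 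As in the global case, Maple-assisted constant-chasing (rounded conservatively) will be needed to land the explicit numbers $3.61$, $1.302$, and $1.18$.
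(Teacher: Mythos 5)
Your overall skeleton matches the paper's proof: blocks $J_{m}=[2^{-m-1},2^{-m})$, truncation at the level $m\left( \varepsilon \right) $, the choice $d=1+\frac{2}{\varepsilon }$ in Lemma \ref{tail estimate}, the split into $\varepsilon \leq 1$ and $\varepsilon >1$, and a sum-versus-integral comparison for $\mathbb{P}\left( B_{\varepsilon ,\delta }^{c}\right) $. But there is a genuine gap in your second (pathwise) step: you control the tail $W_{t}-W_{t}^{m\left( \varepsilon \right) }$ through the \emph{global} maximum $\max_{0\leq k\leq 2^{j}-1}\left\vert X_{j,k}\right\vert /\sqrt{L2^{j}}$, which on your good set gives a tail of size roughly $\sqrt{1+\frac{2}{\varepsilon }}\,2^{-m\left( \varepsilon \right) /2}\sqrt{L2^{m\left( \varepsilon \right) }}\asymp C_{\varepsilon }\,2^{-m/2}\left( L2^{m}\right) ^{-\varepsilon /4}\sqrt{m}$ for $t\in J_{m}$. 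Dividing by $h\left( t\right) \asymp \sqrt{2^{-m}L_{2}2^{m}}$ leaves a correction of order $m^{\left( 2-\varepsilon \right) /4}/\sqrt{Lm}$, which \emph{diverges} as $t\rightarrow 0$ whenever $\varepsilon <2$; it cannot be dominated by $s\left( t,\varepsilon \right) -1\leq 3.61/\sqrt{\varepsilon L_{2}\frac{1}{t}}$. Your claimed comparability of $m\left( \varepsilon \right) 2^{-m\left( \varepsilon \right) }$ with $2^{-m-1}\left( L_{2}2^{m+1}\right) ^{-1}$ is false: the definition of $m\left( \varepsilon \right) $ only converts $2^{-m\left( \varepsilon \right) }$ into $2^{-m-1}\left( L2^{m+1}\right) ^{-\varepsilon /2}$, and the extra factor $\sqrt{m}$ coming from $\sqrt{L2^{j}}$ is exactly what ruins the estimate.

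The missing idea is the \emph{localized} (shifted) tail estimate. For $t\in J_{m}$ only the coefficients with $0\leq k<2^{j-m}$ contribute at level $j$, so the paper's good set uses $\max_{j>m\left( \varepsilon \right) }\max_{0\leq k<2^{j-m}}\left\vert X_{j,k}\right\vert /\sqrt{L2^{j-m-1}}\leq 2\sqrt{1+\frac{1}{\varepsilon }}$; its probability is controlled by rerunning the argument of Lemma \ref{tail estimate} with only $2^{j-m}$ variables per level (this is the second summand in the bound \ref{LILSecondProb}), and on this set the tail of $W_{t}$ is at most $\sqrt{1+\frac{1}{\varepsilon }}\sum_{j>m\left( \varepsilon \right) }2^{-j/2}\sqrt{L2^{j-m}}$, where now $\sqrt{L2^{j-m}}\asymp \sqrt{\varepsilon L_{2}\frac{1}{t}}$ near $j=m\left( \varepsilon \right) $ rather than $\sqrt{j}$. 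That replacement is precisely what makes the correction term collapse to the two regimes $\varepsilon ^{-1/2}\max \left\{ \sqrt{L_{2}\frac{1}{t}},\sqrt{\left( L\frac{1}{t}\right) ^{\varepsilon /2}}\right\} ^{-1}$ and $\left( L\frac{1}{t}\right) ^{-\varepsilon /4}$ appearing in $s\left( t,\varepsilon \right) $, with the constant $3.61$; without it, no choice of constants rescues your step two on $\varepsilon \in (0,1]$, the very range the theorem is designed to cover sharply.
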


\begin{proof}
As in Theorem \ref{deviations for fixed delta (modulus)}, the proof is
complete in two steps. First we estimate the size of the set 
\begin{equation*}
B_{\varepsilon ,\delta }=\left\{ \sup_{m\geq n}\sup_{t\in J_{m}}\frac{%
W_{t}^{m\left( \varepsilon \right) }}{h\left( t\right) }\leq \sqrt{%
1+\varepsilon }\right\} \cap \left\{ \sup_{m\geq n}\underset{0\leq k<2^{j-m}}%
{\max_{j>m\left( \varepsilon \right) }}\frac{\left\vert X_{j,k}\right\vert }{%
\sqrt{L2^{j-m-1}}}\leq 2\sqrt{1+\frac{1}{\varepsilon }}\right\} ,
\end{equation*}%
using both Corollary \ref{LIL (nth sum corollary)} and Lemma \ref{tail
estimate}. Then we show that on this set 
\begin{equation*}
\frac{W_{t}}{h\left( t\right) s\left( t,\varepsilon \right) }\leq \sqrt{%
1+\varepsilon }
\end{equation*}%
for all $t\leq \delta .$ Recall, we substitute $d=1+\frac{2}{\varepsilon }$
in Lemma \ref{tail estimate}.

Let $0<\delta <2^{-4}$ and choose $n$ so that $2^{-n-1}\leq \delta <2^{-n}$.
By Corollary \ref{LIL (nth sum corollary)} and Lemma \ref{tail estimate},we
establish that $\mathbb{P}\left( B_{\varepsilon ,\delta }^{c}\right) $ is no
greater than 
\begin{eqnarray}
&&\sum_{m=n}^{\infty }\left( \frac{2^{m\left( \varepsilon \right)
-m-1}\left( L2^{m+2}\right) ^{-\left( 1+\varepsilon \right) }}{\sqrt{\pi
\left( 1+\varepsilon \right) L_{2}2^{m+1}}}\right.  \notag \\
&&\left. +\frac{2^{-\left( 1+\frac{2}{\varepsilon }\right) \left( m\left(
\varepsilon \right) -m+1\right) }}{\left( 1-2^{-\left( 1+\frac{2}{%
\varepsilon }\right) }\right) \sqrt{\pi (2+\frac{2}{\varepsilon }%
)L2^{m\left( \varepsilon \right) -m+1}}}\right)  \notag \\
&\leq &\frac{1}{\sqrt{\pi \left( 1+\varepsilon \right) L_{2}2^{n+1}}}%
\sum_{m=n}^{\infty }\left( 2^{m\left( \varepsilon \right) -m-1}\left(
L2^{m+2}\right) ^{-\left( 1+\varepsilon \right) }\right.
\label{LILSecondProb} \\
&&\left. +2^{-\left( 1+\frac{2}{\varepsilon }\right) \left( m\left(
\varepsilon \right) -m+1\right) }\left( 1-2^{-\left( 1+\frac{2}{\varepsilon }%
\right) }\right) ^{-1}\right)  \notag
\end{eqnarray}%
To approximate this sum, we consider two cases: $0<\varepsilon \leq 1$ and $%
\varepsilon >1.$

Consider $0<\varepsilon \leq 1$. By definition of $m\left( \varepsilon
\right) $, we see that the summand for the infinite sum in expression \ref%
{LILSecondProb} is no greater than 
\begin{equation*}
2^{\frac{\varepsilon }{2L2}L_{2}2^{m+1}+1-\frac{1}{L2}}\left(
L2^{m+2}\right) ^{-\left( 1+\varepsilon \right) }+\frac{2^{-\left( 1+\frac{2%
}{\varepsilon }\right) \left( \frac{\varepsilon }{2L2}L_{2}2^{m+1}-\frac{1}{%
L2}+2\right) }}{\left( 1-2^{-\left( 1+\frac{2}{\varepsilon }\right) }\right) 
}.
\end{equation*}%
Thus the summand from expression \ref{LILSecondProb} is bounded above by 
\begin{equation*}
\left( m+2\right) ^{-\left( 1+\frac{\varepsilon }{2}\right) }2^{1-\frac{1}{L2%
}}+\frac{\left( m+1\right) ^{-\left( 1+\frac{\varepsilon }{2}\right)
}2^{-\left( 1+\frac{2}{\varepsilon }\right) \left( 2-\frac{1}{L2}\right) }}{%
\left( 1-2^{-\left( 1+\frac{2}{\varepsilon }\right) }\right) }.
\end{equation*}%
With some algebraic manipulation and by approximating a sum with an
appropriate integral we estimate expression \ref{LILSecondProb} from above
by 
\begin{equation*}
\frac{\left( L2\right) ^{-\left( 1+\frac{\varepsilon }{2}\right) }\left(
n+1\right) ^{-\frac{\varepsilon }{2}}}{\varepsilon \sqrt{\pi \left(
1+\varepsilon \right) L_{2}2^{n+1}}}\left( 2^{2-\frac{1}{L2}}+\frac{%
2^{-\left( 1+\frac{2}{\varepsilon }\right) \left( 1-\frac{1}{L2}\right) }}{%
2^{1+\frac{2}{\varepsilon }}-1}\left( 2+\frac{\varepsilon }{5}\right)
\right) .
\end{equation*}%
Since $0<\varepsilon \leq 1,$ 
\begin{equation*}
\frac{1}{2^{1+\frac{2}{\varepsilon }}-1}\leq \frac{8}{7}\frac{1}{2^{1+\frac{2%
}{\varepsilon }}}.
\end{equation*}%
and the function 
\begin{equation*}
\varepsilon \rightarrow \frac{2^{2-\frac{1}{L2}}+2^{-\left( 1+\frac{2}{%
\varepsilon }\right) \left( 2-\frac{1}{L2}\right) }\frac{8}{7}\left( 2+\frac{%
1}{5}\right) }{L\left( 2\right) \sqrt{\pi \left( 1+\varepsilon \right) }}
\end{equation*}%
attains an absolute maximum of no more than $1.302$ at $\varepsilon =1,$
producing the desired bound for expression \ref{LILSecondProb}. That is, 
\begin{equation*}
\mathbb{P}\left( B_{\varepsilon ,\delta }^{c}\right) \leq \frac{1.302}{%
\varepsilon \left( L\frac{1}{\delta }\right) ^{\frac{\varepsilon }{2}}\sqrt{%
L_{2}\frac{1}{\delta }}}
\end{equation*}%
when $0<\varepsilon \leq 1.$

Consider $\varepsilon >1.$ As in the previous case, we find an upper bound
for the summand of expression \ref{LILSecondProb}. Each summand is convex as
a function of $m\left( \varepsilon \right) .$ Recall that $m\left(
\varepsilon \right) $ changes definition when $\varepsilon >1.$ Therefore,
we may replace the greatest integer function in $m\left( \varepsilon \right) 
$ with either $\frac{\varepsilon }{2L2}L_{2}2^{m+1}$ or $\frac{\varepsilon }{%
2L2}L_{2}2^{m+1}-1.$ We determine which of these produces a greater value
for the summand of the infinite sum found in expression \ref{LILSecondProb}.
For $\frac{\varepsilon }{2L2}L_{2}2^{m+1}$ the summand is bounded above by 
\begin{equation}
\left( L2^{m+2}\right) ^{-\left( \frac{\varepsilon }{2}+1\right) }+\frac{%
\left( L2^{m+1}\right) ^{-\left( \frac{\varepsilon }{2}+1\right) }}{%
2^{\left( 1+\frac{2}{\varepsilon }\right) }\left( 2^{\left( 1+\frac{2}{%
\varepsilon }\right) }-1\right) }  \label{LIL gif = x}
\end{equation}%
and for $\frac{\varepsilon }{2L2}L_{2}2^{m+1}-1,$ the summand is bounded
above by 
\begin{equation}
\frac{\left( L2^{m+2}\right) ^{-\left( \frac{\varepsilon }{2}+1\right) }}{2}+%
\frac{\left( L2^{m+1}\right) ^{-\left( \frac{\varepsilon }{2}+1\right) }}{%
2^{\left( 1+\frac{2}{\varepsilon }\right) }-1}.  \label{LIL gif=x-1}
\end{equation}%
Comparing expressions \ref{LIL gif = x} and \ref{LIL gif=x-1}, we see the
former is greater. Thus the sum of expression \ref{LILSecondProb} is bounded
above by 
\begin{equation}
\sum_{m=n}^{\infty }\left( \left( L2^{m+2}\right) ^{-\left( \frac{%
\varepsilon }{2}+1\right) }+\frac{\left( L2^{m+1}\right) ^{-\left( \frac{%
\varepsilon }{2}+1\right) }}{2^{\left( 1+\frac{2}{\varepsilon }\right)
}\left( 2^{\left( 1+\frac{2}{\varepsilon }\right) }-1\right) }\right) .
\label{LIL e>1 prob bnd}
\end{equation}%
Again we use algebraic manipulation and approximate a sum with an
appropriate integral to see that we can bound expression \ref{LIL e>1 prob
bnd} above by 
\begin{equation*}
\frac{1}{\left( L2\right) ^{\left( \frac{\varepsilon }{2}+1\right) }\left(
n+1\right) ^{\frac{\varepsilon }{2}}}\left( \frac{5^{-1}}{2^{\left( 1+\frac{2%
}{\varepsilon }\right) }\left( 2^{\left( 1+\frac{2}{\varepsilon }\right)
}-1\right) }+\left( 1+\frac{1}{2^{\left( 1+\frac{2}{\varepsilon }\right)
}\left( 2^{\left( 1+\frac{2}{\varepsilon }\right) }-1\right) }\right) \frac{2%
}{\varepsilon }\right) .
\end{equation*}

Substitute $x=\frac{2}{\varepsilon }+1$ and note that the function%
\begin{equation*}
f\left( x\right) =\frac{5^{-1}}{2^{x}\left( 2^{x}-1\right) }+\left( 1+\frac{1%
}{2^{x}\left( 2^{x}-1\right) }\right) (x-1)
\end{equation*}%
is increasing for $x\in (1,3]$. Therefore $f(x)\leq f(3)\leq 2.04$ and hence%
\begin{equation*}
\mathbb{P}\left( B_{\varepsilon ,\delta }^{c}\right) \leq \frac{2.04}{\left(
L2\right) \left( L\frac{1}{\delta }\right) ^{\frac{\varepsilon }{2}}\sqrt{%
\pi \left( 1+\varepsilon \right) L_{2}\frac{1}{\delta }}}\leq \frac{1.18}{%
\left( L\frac{1}{\delta }\right) ^{\frac{\varepsilon }{2}}\sqrt{L_{2}\frac{1%
}{\delta }}}
\end{equation*}%
when $\varepsilon \in \lbrack 1,\infty ).$

We are now ready to determine an upper bound for 
\begin{equation*}
\frac{W_{t}}{h\left( t\right) \sqrt{1+\varepsilon }}
\end{equation*}%
on the set $B_{\varepsilon ,\delta }$ which will hold for all $t\leq \delta
. $ In the statement of this theorem, the desired upper bound is referred to
as $s\left( \delta ,\varepsilon \right) .$

Let $t\leq \delta $ and $m$ be such that $2^{-\left( m+1\right) }<t\leq
2^{-m}.$ By Corollary \ref{LIL (nth sum corollary)} and Lemma \ref{tail
estimate}, on the set $B_{\varepsilon ,\delta },$ we have 
\begin{align}
W_{t}& =W_{t}^{m\left( \varepsilon \right) }+\left( W_{t}-W_{t}^{m\left(
\varepsilon \right) }\right)  \notag \\
& \leq h\left( t\right) \sqrt{1+\varepsilon }+\frac{1}{2}\sum_{j=m\left(
\varepsilon \right) +1}^{\infty }2^{-j/2}\sqrt{L2^{j-m}}\underset{0\leq
k<2^{j-m}}{\sup_{j>m\left( \varepsilon \right) }}\frac{\left\vert
X_{j,k}\right\vert }{\sqrt{L2^{j-m}}}  \notag \\
& \leq h\left( t\right) \sqrt{1+\varepsilon }+\sqrt{1+\frac{1}{\varepsilon }}%
\sum_{j=m\left( \varepsilon \right) +1}^{\infty }2^{-j/2}\sqrt{L2^{j-m}}.
\label{LIL On Set}
\end{align}

By reindexing, the expression \ref{LIL On Set} is equivalent to 
\begin{equation*}
h\left( t\right) \sqrt{1+\varepsilon }\left( 1+g\left( \varepsilon ,t\right)
\right) ,
\end{equation*}%
where 
\begin{equation*}
g\left( \varepsilon ,t\right) =\sqrt{\frac{1}{\varepsilon }}\frac{1}{\sqrt{%
2tL_{2}\frac{1}{t}}}\sqrt{\frac{L2}{2^{m\left( \varepsilon \right) +1}}}%
\sum_{j=0}^{\infty }2^{-j/2}\sqrt{m\left( \varepsilon \right) +1+j-m}.
\end{equation*}

We focus our attentions on estimating the function $g\left( \varepsilon
,t\right) .$ As in our estimation of $B_{\varepsilon ,\delta }$, we consider
two cases,$0<\varepsilon \leq 1$ and $\varepsilon >1$.

\begin{remark}
In our estimations below we must remove the greatest integer function
appearing in $m\left( \varepsilon \right) .$There are simple methods for
doing so which result in worse constants. We provide a more detailed
computation which results in smaller constants.
\end{remark}

\begin{case}
$0<\varepsilon \leq 1.$

Consider $m\left( \varepsilon \right) =m.$ We break down computations
further by looking at the case where%
\begin{equation*}
L_{2}\frac{1}{t}\geq \left( L\frac{1}{t}\right) ^{\frac{\varepsilon }{2}}
\end{equation*}%
and the case where this inequality does not hold.

When $L_{2}\frac{1}{t}\geq \left( L\frac{1}{t}\right) ^{\frac{\varepsilon }{2%
}},$ $g\left( \varepsilon ,t\right) $ is bounded above by 
\begin{eqnarray*}
&&\frac{1}{\sqrt{2\varepsilon tL_{2}\frac{1}{t}}}\sqrt{\frac{L2}{2^{m+1}}}%
\sum_{j=0}^{\infty }2^{-j/2}\sqrt{1+j} \\
&\leq &\frac{1}{\sqrt{\varepsilon L_{2}\frac{1}{t}}}\sqrt{\frac{L2}{2}}%
\sum_{j=0}^{\infty }2^{-j/2}\sqrt{1+j}\leq \frac{3.46}{\sqrt{\varepsilon
L_{2}\frac{1}{t}}}.
\end{eqnarray*}

When $L_{2}\frac{1}{t}<\left( L\frac{1}{t}\right) ^{\frac{\varepsilon }{2}},$
we have $\left( L2^{m+1}\right) ^{\frac{\varepsilon }{2}}\leq \frac{e}{2}$
since $m\left( \varepsilon \right) =m$, . Also 
\begin{equation}
t\rightarrow \frac{\left( L\frac{1}{t}\right) ^{\frac{\varepsilon }{2}}}{%
tL_{2}\frac{1}{t}}  \label{DecreasingFuncoft}
\end{equation}%
is decreasing when $t<2^{-4}.$ (We use this fact repeatedly without mention
throughout the rest of the paper.)\ Thus $g\left( \varepsilon ,t\right) $ is
bounded above by%
\begin{equation*}
\frac{\sqrt{\left( L2^{m+1}\right) ^{\frac{\varepsilon }{2}}2^{m+1}}}{\sqrt{%
2\varepsilon \left( L\frac{1}{t}\right) ^{\frac{\varepsilon }{2}}L_{2}2^{m+1}%
}}\sqrt{\frac{L2}{2^{m+1}}}\sum_{j=0}^{\infty }2^{-j/2}\sqrt{1+j}\leq \frac{%
3.61}{\sqrt{\varepsilon \left( L\frac{1}{t}\right) ^{\frac{\varepsilon }{2}}}%
}.
\end{equation*}

Next consider $m\left( \varepsilon \right) =m+k,$ for $k\geq 1$ Again we
consider two cases, 
\begin{equation*}
L_{2}\frac{1}{t}\geq \left( L\frac{1}{t}\right) ^{\frac{\varepsilon }{2}}%
\text{ and }L_{2}\frac{1}{t}<\left( L\frac{1}{t}\right) ^{\frac{\varepsilon 
}{2}}.
\end{equation*}

When $L_{2}\frac{1}{t}\geq \left( L\frac{1}{t}\right) ^{\frac{\varepsilon }{2%
}}$, $g\left( \varepsilon ,t\right) $ is bounded above by 
\begin{equation}
\frac{1}{\sqrt{\varepsilon L_{2}\frac{1}{t}}}\sqrt{\frac{L2}{2^{k+1}}}%
\sum_{j=0}^{\infty }2^{-j/2}\sqrt{k+1+j}\leq \frac{2.87}{\sqrt{\varepsilon
L_{2}\frac{1}{t}}}  \label{LIL on set e>1, k>1}
\end{equation}%
since, for $k\geq 1,$ $j\geq 0,$ 
\begin{equation*}
k\rightarrow \frac{k+1+j}{2^{k}}
\end{equation*}%
is decreasing.

When $L_{2}\frac{1}{t}<\left( L\frac{1}{t}\right) ^{\frac{\varepsilon }{2}},$
we look at two subcases: $k=1$ and $k\geq 2.$

$k=1$ implies that the greatest integer function for $m\left( \varepsilon
\right) $ disappears. Here $g\left( \varepsilon ,t\right) $ is bounded above
by 
\begin{equation}
\frac{1}{\sqrt{\varepsilon \left( L\frac{1}{t}\right) ^{\frac{\varepsilon }{2%
}}}}\sqrt{\frac{\left( L2^{m+1}\right) ^{\frac{\varepsilon }{2}}}{%
2L_{2}2^{m+1}}}\sqrt{\frac{L2}{2}}\sum_{j=0}^{\infty }2^{-j/2}\sqrt{2+j}.
\label{LIL on set k=1}
\end{equation}%
With the restriction of $L_{2}\frac{1}{t}<\left( L\frac{1}{t}\right) ^{\frac{%
\varepsilon }{2}},$ the function 
\begin{equation*}
m\rightarrow \frac{\left( L2^{m+1}\right) ^{\frac{\varepsilon }{2}}}{%
L_{2}2^{m+1}}.
\end{equation*}%
on the set where $m\left( \varepsilon \right) =m+1$ is increasing. So we
substitute by the largest $m$ satisfying $m\left( \varepsilon \right) =1$
and get 
\begin{equation*}
\frac{\left( L2^{m+1}\right) ^{\frac{\varepsilon }{2}}}{\left(
L_{2}2^{m+1}\right) }\leq \frac{e}{\frac{2}{\varepsilon }}.
\end{equation*}%
Thus, expression \ref{LIL on set k=1}\ is bounded by $\frac{3.34}{\sqrt{%
\left( L\frac{1}{t}\right) ^{\frac{\varepsilon }{2}}}}$.

$k\geq 2$ implies that the greatest integer function appearing in $m\left(
\varepsilon \right) $ is greater than or equal to one. For $x=\frac{%
\varepsilon }{2}\log _{2}L2^{m+1}+1-\frac{1}{L2},$ the function 
\begin{equation*}
x\rightarrow \frac{x+2+j}{2^{x}}
\end{equation*}%
is decreasing for $x\geq 0,$ $j\geq 0.$ Thus, by substituting $\left[ x%
\right] $ with $x-1,$ $g\left( \varepsilon ,t\right) $ is bounded above by 
\begin{eqnarray*}
&&\sqrt{\frac{L2}{\varepsilon 2tL_{2}\frac{1}{t}\left( 2^{\frac{\varepsilon 
}{2L2}L_{2}2^{m+1}-\frac{1}{L2}+m+2}\right) }}\sum_{j=0}^{\infty }2^{-j/2}%
\sqrt{\frac{\varepsilon }{2L2}L_{2}2^{m+1}-\frac{1}{L2}+2+j} \\
&\leq &\frac{1}{\sqrt{\varepsilon \left( L\frac{1}{t}\right) ^{\frac{%
\varepsilon }{2}}}}\sqrt{\frac{L2}{2^{2-\frac{1}{L2}}}}\sum_{j=0}^{\infty
}2^{-j/2}\sqrt{\frac{\varepsilon }{2L2}+\frac{2-\frac{1}{L2}+j}{L_{2}2^{m+1}}%
} \\
&\leq &\frac{1}{\sqrt{\left( L\frac{1}{t}\right) ^{\frac{\varepsilon }{2}}}}%
\sqrt{\frac{L2}{2^{2-\frac{1}{L2}}}}\sum_{j=0}^{\infty }2^{-j/2}\sqrt{\frac{1%
}{2}\left( \frac{1}{L2}+\frac{2-\frac{1}{L2}+j}{1}\right) }\leq \frac{3.34}{%
\sqrt{\left( L\frac{1}{t}\right) ^{\frac{\varepsilon }{2}}}}.
\end{eqnarray*}
\end{case}

\begin{case}
$\varepsilon >1.$

For $x=\left[ \frac{\varepsilon }{2L2}L_{2}2^{m+1}\right] ,$ $x\rightarrow 
\frac{x+l}{2^{x}}$decreases as long as $l\geq 2$ and $x\geq 0.$ Therefore $%
g\left( \varepsilon ,t\right) $ is estimated by 
\begin{equation}
\sqrt{\frac{1}{\left( L\frac{1}{t}\right) ^{\frac{\varepsilon }{2}}}}\sqrt{%
\frac{L2}{2}}\sum_{j=0}^{\infty }2^{-j/2}\sqrt{\frac{1}{2L2}+\frac{1+j}{%
L_{2}2^{5}}}\leq \frac{3.59}{\sqrt{\left( L\frac{1}{t}\right) ^{\frac{%
\varepsilon }{2}}}}.  \label{LIL on set e>1 finalEstimate}
\end{equation}
\end{case}

Combining the various estimates from both cases, we have%
\begin{equation*}
W_{t}\leq h\left( t\right) \sqrt{1+\varepsilon }\left( 1+3.61\left( \frac{%
\mathbf{1}_{(0,1]}(\varepsilon )}{\sqrt{\varepsilon }\max \left\{ \sqrt{L_{2}%
\frac{1}{t}},\sqrt{\left( L\frac{1}{t}\right) ^{\frac{\varepsilon }{2}}}%
\right\} }+\frac{\mathbf{1}_{\left( 1,\infty \right) }(\varepsilon )}{\sqrt{%
\left( L\frac{1}{t}\right) ^{\frac{\varepsilon }{2}}}}\right) \right) .
\end{equation*}
\end{proof}

\begin{remark}
The authors note that in their paper \cite{DobricMarano}, the constants in
Proposition 2.1 can be improved upon slightly by using the results of
Theorem \ref{LIL Maximal Deviation Theorem}.
\end{remark}


\begin{thebibliography}{99}
\bibitem{DobricMarano} Dobric, V. and Marano, L.: Rates of convergence for L%
\'{e}vy's modulus of continuity and \u{H}in\u{c}in's law of the iterated
logarithm. High Dimensional Probability III (J. Hoffmann-J\o rgensen, M.
Marcus, and J. Wellner, eds.), Progress in Probability. \textbf{55}, Birkh%
\"{a}user, Basel, 105-109 (2003)

\bibitem{Einmahel} Einmahl, U.: The Darling-Erd\"{o}s theorem for sums of
i.i.d. random variables. Probab. Theory Related Fields \textbf{82(2)},
241-257 (1989)

\bibitem{Erdos} Erd\"{o}s, P.: On the law of the iterated logarithm. Ann.
Math. \textbf{43}, 419-436 (1942)

\bibitem{kolmogorov} Gnedenko, B. V. and Kolmogorov, A.N.: Limit
distributions for sums of independent random variables. Addison-Wesley
Publishing Company, Inc., Cambridge, Mass (1954)

\bibitem{Kahane} Kahane, J.P.: Some random series of functions. Cambridge
University Press, Cambridge (1985)

\bibitem{Khosh} Khoshnevisan, D., Levin, D. and Shi, Z.: Extreme-Value
Analysis of the LIL for Brownian Motion. Electron Commun Probab \textbf{10}
Paper \textbf{20}, 196-206 (2005)

\bibitem{levy} L\'{e}vy, P.: Th\'{e}orie de l'Additiondes Variables Al\'{e}%
atoires.\ Gauthier-Villars, Paris (1937)

\bibitem{meyer} Meyer, Y.: Ondelettes et Op\'{e}rateurs. Hermann, Paris
(1990)

\bibitem{pinsky} Pinsky, M.: A. Brownian continuity modulus via series
expansions. J. Theoret. Probab. Vol. \textbf{14}, No. \textbf{1}, 261--266
(2001)

\bibitem{Steele} Steele, M.: Stochastic Calculus and Financial Applications
(Stochastic Modelling and Applied Probability). Springer (2001)

\bibitem{talagrand} Talagrand, M. and Ledoux, M.: Probability in Banach
Spaces.\textit{\ }Springer-Verlag (1980)
\end{thebibliography}
\end{document}